\newtheorem{Def}{Definition}[section]
\newtheorem{Thm}[Def]{Theorem}
\newtheorem{Prp}[Def]{Proposition}
\newtheorem{Lem}[Def]{Lemma}
\newtheorem{Cor}[Def]{Corollary}
\def\ar{\scalerel*{\includegraphics{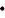}}{]}}
\def\arx{\scalerel*{\includegraphics{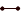}}{]}}
\def\arxx{\scalerel*{\includegraphics{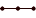}}{]}}
\def\arxxn{\scalerel*{\includegraphics{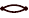}}{]}}
\def\arxxx{\scalerel*{\includegraphics{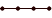}}{]}}
\def\arxxxn{\scalerel*{\includegraphics{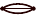}}{]}}
\def\arxxnx{\scalerel*{\includegraphics{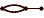}}{]}}
\def\arxxnxn{\scalerel*{\includegraphics{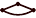}}{]}}
\def\arxxxx{\scalerel*{\includegraphics{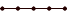}}{]}}
\def\arxxxxn{\scalerel*{\includegraphics{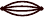}}{]}}
\def\arxxxnx{\scalerel*{\includegraphics{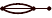}}{]}}
\def\arxxxnxn{\scalerel*{\includegraphics{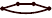}}{]}}
\def\arxxnxx{\scalerel*{\includegraphics{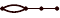}}{]}}
\def\arxxnxxn{\scalerel*{\includegraphics{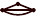}}{]}}
\def\arxxnxnx{\scalerel*{\includegraphics{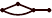}}{]}}
\def\arxxnxnxn{\scalerel*{\includegraphics{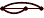}}{]}}
\def\arxxuxx{\scalerel*{\includegraphics{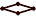}}{]}}
\def\arxxuxxn{\scalerel*{\includegraphics{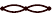}}{]}}
\begin{document}

\title{Ehrhart Polynomials of Generic Orthotopes}

\author[$\star$]{David Richter}

\affil[$\star$]{Department of Mathematics, Western Michigan University, Kalamazoo MI 49008 (USA),
\texttt{david.richter@wmich.edu}}

\maketitle

\begin{abstract}

A generic orthotope is an orthogonal polytope whose tangent cones are described
by read-once Boolean functions.  
The purpose of this note is to develop a theory of
Ehrhart polynomials for integral generic orthotopes.
The most remarkable part of this theory is a relation between
the number of lattice points in an integral generic orthotope $P$ and the number
of unit cubes in $P$ of various floral types.  This formula is facilitated
through the introduction of a set of ``local polynomials'' defined for 
every read-once Boolean function.
\end{abstract}

2020 AMS Mathematics Subject Classification:  51M20, 52B11, 52B70.

\section{Introduction}

This author introduced a theory of generic orthotopes in \cite{richter_genericorthotopes}.
The goal of this note is to
establish a theory of Ehrhart polynomials specialized to integral generic orthotopes, extending
the results of \cite{richter_genericorthotopes}.
The most prominent result here is a formula, stated as Theorem \ref{mainresult}, 
which demonstrates that counting
lattice points in a generic orthotope $P$ is equivalent to counting integral unit cubes
in $P$ while keeping track of their ``floral types''.
We demonstrate this formula by introducing an
algebra $\mathcal{F}$ generated by a set of equivalence classes of read-once
boolean functions (essentially series-parallel diagrams) and a couple of useful
bases $\{h(e_\alpha)\}$ and $\{h^{-1}(e_\alpha)\}$ for this algebra. 
This analysis gives rise to
a plethora of identities concerning integral generic orthotopes.
It also showcases the calculus of series-parallel diagrams used for studying
generic orthotopes as introduced in \cite{richter_genericorthotopes}.

A major theme of this work (including \cite{richter_genericorthotopes}) is that
while many workers have studied aspects
of orthogonal polytopes in a fixed dimension, usually $d=2$ or $d=3$, relatively
few have considered common structural questions among orthogonal polytopes irrespective
of dimension.  As one may notice,
we may prove all of the results shown here and in 
\cite{richter_genericorthotopes} in an elementary manner.
Thus, the results shown here appear among the lowest-hanging fruit in the 
deep, dark, and largely unexplored forest of general orthogonal polytopes.
On another point, this author wants to persuade the reader 
that generic orthotopes in particular possess a distinctive intrinsic charm and hopes that they
will notice this throughout this theory; for example, this author is impressed at the way that
ideas and results tend to flow as soon as one demands that every vertex of an orthogonal
polytope be floral.
%For example, manually working several
%of the computations described herein often brings one a sense of guilty pleasure,
%and this author thus encourages the reader to try their own hand at it.
(On this point, it should
be obvious that this author is biased!)

We have attempted to be as pedantic as possible in this note.
Thus, as we develop our adaptation of Ehrhart polynomials to generic orthotopes,
we concurrently walk the reader through a running example in 2 dimensions.
The main reason for this
is that we wish to highlight as brightly as possible the distinctiveness of
the Ehrhart theory as it pertains to generic orthotopes in particular.
Indeed, the underlying algebra $\mathcal{F}$ and the polynomials $h(e_\alpha)$ or $h^{-1}(e_\alpha)$
which are instrumental in expressing our results have been not found among any extant
works about Ehrhart theory and lattice point enumeration.  (This should not be surprising
because the introduction of read-once Boolean functions in the study of orthogonal polytopes
appears to be novel.
For that matter, we have not found these notions among the extant literature on Boolean functions or Boolean
algebras.)
The example serves to display the motivation of the theory but also to
better understand the notation and concepts used throughout.  
We also offer
an example in 3 dimensions in a separate section, should the reader wish
to see something more illustrative or more substantial.  Also,
this author feels that the examples also serve to better understand the foundational article
\cite{richter_genericorthotopes}.

\subsection{Some notions and conventions}

This subsection summarizes some notation conventions and notions used throughout.
The reader should refer to \cite{richter_genericorthotopes} for supplemental details.

We use the symbol $d$ 
to denote the dimension of the ambient space, and in most cases assume $d$ is 
fixed; certainly $d$ is fixed when we speak of a particular orthogonal
polytope.  Accordingly, we denote $\left[d\right]:=\{1,2,3,...,d\}$.
In most cases, we use the symbol $P$ for an arbitrary $d$-dimensional bounded
integral generic orthotope.

A ``congruence class'' may refer to a floral arrangement congruence class or a floral vertex
congruence class.  In all cases, we use ``congruence'' to mean Euclidean-geometric
congruence.  As we explained in \cite{richter_genericorthotopes}, the Coxeter group $BC_d$
acts on floral arrangements, so we say that $\alpha$ is congruent to $\alpha'$ if
there is a group element $g\in BC_d$ such that $\alpha'=g\cdot\alpha$.  In other words,
$\alpha$ is congruent to $\alpha'$ if both $\alpha$ and $\alpha'$ lie in the same orbit
under the standard action of $BC_d$ on $\mathbb{R}^d$.  Notice that the phrase
``floral arrangement congruence class'' is meaningful only when $d$ is fixed.
We use lower case Greek letters $\alpha,\beta$ to denote 
either a floral vertex, a floral arrangement, a floral vertex congruence class, or a floral
arrangement congruence class.  We justify the overuse of this notation by the 
fact that this theory is already laden with notation; we try to explain the usage 
where it may be unclear.

The {\it standard $d$-dimensional unit cube in $\mathbb{R}^d$} is $I^d=[0,1]^d$.  
A {\it $d$-dimensional integral unit cube} is any translate $v+I^d$, where $v\in\mathbb{Z}^d$.
If $k\in\{0,1,2,...,d\}$, then a {\it $k$-dimensional integral unit cube} is any $k$-dimensional
face of a $d$-dimensional integral unit cube.
A {\it relatively open $k$-dimensional integral unit cube} is the relative interior
of a $k$-dimensional integral unit cube with respect to its affine hull.
Notice that every $x\in\mathbb{R}^d$ belongs to a unique relatively open $k$-dimensional
integral unit cube
for some $k\in\{0,1,2,...,d\}$.  The dimension of the cube containing $x=(x_1,x_2,...,x_d)$ is given by
$$k(x):=\#\left(\{i:x_i\notin\mathbb{Z}\}\right).$$
Thus, $k(x)=0$ precisely when $x\in\mathbb{Z}^d$ is a lattice point
and $k(x)=d$ precisely when $x$ lies interior to a $d$-dimensional integral unit cube.

On multiple occasions throughout, we refer to the ``floral type'' of a point or a
relatively open cube.  The following, which may be inferred from the results in
\cite{richter_genericorthotopes}, is intended to clarify the meaning of this.

\begin{Prp}
Suppose $P\subset\mathbb{R}^d$ is a $d$-dimensional integral generic orthotope and
$C\subset P$ is a relatively open integral unit cube.  Then there is a 
floral arrangement $\alpha\subset\mathbb{R}^d$ 
such that the tangent cone at every point of $C$ is congruent to $\alpha$.
\end{Prp}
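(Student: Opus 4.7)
My plan is to show that the local combinatorial structure of $P$ near any point $x \in C$ depends only on data that is constant as $x$ varies in $C$, so that the tangent cones at different points of $C$ differ only by translation and are thus Euclidean-congruent. Since $P$ is an integral generic orthotope, we already know that each such tangent cone is a floral arrangement, so the whole content is to promote ``constant floral type'' from a single point to the entire relatively open cube $C$. The key will be that $P$ is integral, so $\partial P$ sits inside the integer coordinate hyperplane arrangement, and translations of $x$ within $C$ only move the non-integer coordinates.

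First I would fix an arbitrary $x \in C$ and split its coordinates according to integrality, setting $I := \{i \in [d] : x_i \in \mathbb{Z}\}$ and $n_i := x_i$ for $i \in I$. Because $C$ is a relatively open integral unit cube, the set $I$ and the integers $(n_i)_{i \in I}$ are the same for every $y \in C$; the remaining coordinates of $y$ vary freely in open unit intervals. In particular $k(y) = d - |I|$ is constant on $C$, so $C$ lies in a single relatively open cube stratum of $\mathbb{R}^d$.

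Next I would choose an open box neighborhood $U \supset \overline{C}$ small enough that the only integer coordinate hyperplanes meeting $U$ are those of the form $\{y_i = n_i\}$ with $i \in I$. Since $P$ is an integral orthotope, $\partial P \cap U$ lies inside the union of these finitely many walls, which partition $U$ into $2^{|I|}$ relatively open product pieces, one for each sign vector $\sigma \in \{+,-\}^{I}$. Each such piece is either entirely contained in $P$ or entirely disjoint from $P$, and this in/out classification is a property of $P$ and of the walls alone, independent of the particular choice of $x \in C$.

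From this product description of $P \cap U$, the tangent cone at $x$ is the translate at $x$ of a single cone determined by $I$, $(n_i)_{i \in I}$, and the sign classification. Moving $x$ to any other $y \in C$ changes only the free (non-integer) coordinates of the apex, so the two tangent cones are related by a pure translation and therefore by a Euclidean congruence. The resulting fixed cone, viewed as a subset of $\mathbb{R}^d$, is a floral arrangement by the hypothesis that $P$ is a generic orthotope, and this is the desired $\alpha$. I expect the main obstacle to be pinning down the product decomposition of $P \cap U$ cleanly, i.e., verifying that no further pieces of $\partial P$ can intrude into $U$; this should follow from the definition of integral generic orthotope together with the local description of floral tangent cones established in \cite{richter_genericorthotopes}.
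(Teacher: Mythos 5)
Your overall strategy is the natural one, and it is essentially what a complete argument must do: the set $I$ of integer coordinates and their values $(n_i)_{i\in I}$ are constant on $C$, so the local wall structure, and hence the tangent cone up to translation, should not change as the point moves in $C$. (Note the paper itself offers no proof of this proposition; it defers it to \cite{richter_genericorthotopes}.) However, one concrete step fails as written: when $\dim C\geq 1$ there is no open $U\supset\overline{C}$ that meets no integer coordinate hyperplanes other than $\{y_i=n_i\}$, $i\in I$. The relative boundary $\overline{C}\setminus C$ lies on the hyperplanes $\{y_i=\lfloor x_i\rfloor\}$ and $\{y_i=\lceil x_i\rceil\}$ for $i\notin I$, so every neighborhood of $\overline{C}$ meets those hyperplanes as well, and the product decomposition of $U$ into exactly $2^{|I|}$ pieces, on which your uniform in/out classification rests, is not available for such a $U$.

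The gap is local and fixable: work with a neighborhood of $C$ rather than of its closure, e.g.\ $U=\prod_{i\in I}(n_i-1,n_i+1)\times\prod_{i\notin I}\left(\lfloor x_i\rfloor,\lceil x_i\rceil\right)$, which is open, contains $C$, and meets only the walls $\{y_i=n_i\}$, $i\in I$. Since $P$ is integral, $\partial P\cap U$ lies in these walls, so each of the $2^{|I|}$ open box components of $U$ minus the walls is connected and disjoint from $\partial P$, hence contained in $\operatorname{int}P$ or in $\mathbb{R}^d\setminus P$; this single in/out assignment determines the tangent cone at every $x\in C$ as $x$ plus one fixed union of orthant cylinders, giving congruence by translation. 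Equivalently, and perhaps cleaner, observe that the collection of closed integral unit $d$-cubes containing $x$ consists of the same $2^{|I|}$ cubes for every $x\in C$, and since $P$ is a union of integral unit $d$-cubes, the tangent cone at $x$ is determined by which of these lie in $P$. Your closing appeal to \cite{richter_genericorthotopes} for the resulting cone being a floral arrangement is appropriate, since that is where floral-ness of tangent cones of generic orthotopes is established; the genuine content here is only the constancy along $C$.
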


Thus, if $C\subset P$ is a relatively open integral unit cube, then define the {\it floral
type} of $C$ as the floral arrangement congruence class which contains
the tangent cone of any given point of $C$.  Notice that this meaning extends to the case
when $C$ is a point (i.e.\ a cube of dimension zero) of $P$.
Figure \ref{floraltypes} displays a 3-dimensional generic orthotope whose faces are marked with
various floral types.

We recall from \cite{richter_genericorthotopes} 
that we associate a {\it dimension} to every floral vertex $\alpha$, defined as the number of segments in 
the series-parallel diagram which defines it.  Thus, $\dim(\ar)=0$, $\dim(\arx)=1$, $\dim(\arxx)=\dim(\arxxn)=2$, 
and so on.
On several occasions, we also refer to the {\it degree} 
$\deg\alpha$ of a floral arrangement, this being its degree of genericity as defined
in \cite{richter_genericorthotopes}.  Moreover, if $\alpha$ is a floral arrangement, then
there exists a positive integer $k$ and a floral vertex $\beta$ supported on $k$ half-spaces 
such that $\alpha$ is congruent to $\beta\times\mathbb{R}^{d-k}$.  In such a circumstance,
we may rely on the relation $\deg\alpha=d-\dim\beta=d-k$.

\begin{figure} 
\centering 
\includegraphics[width=\textwidth]{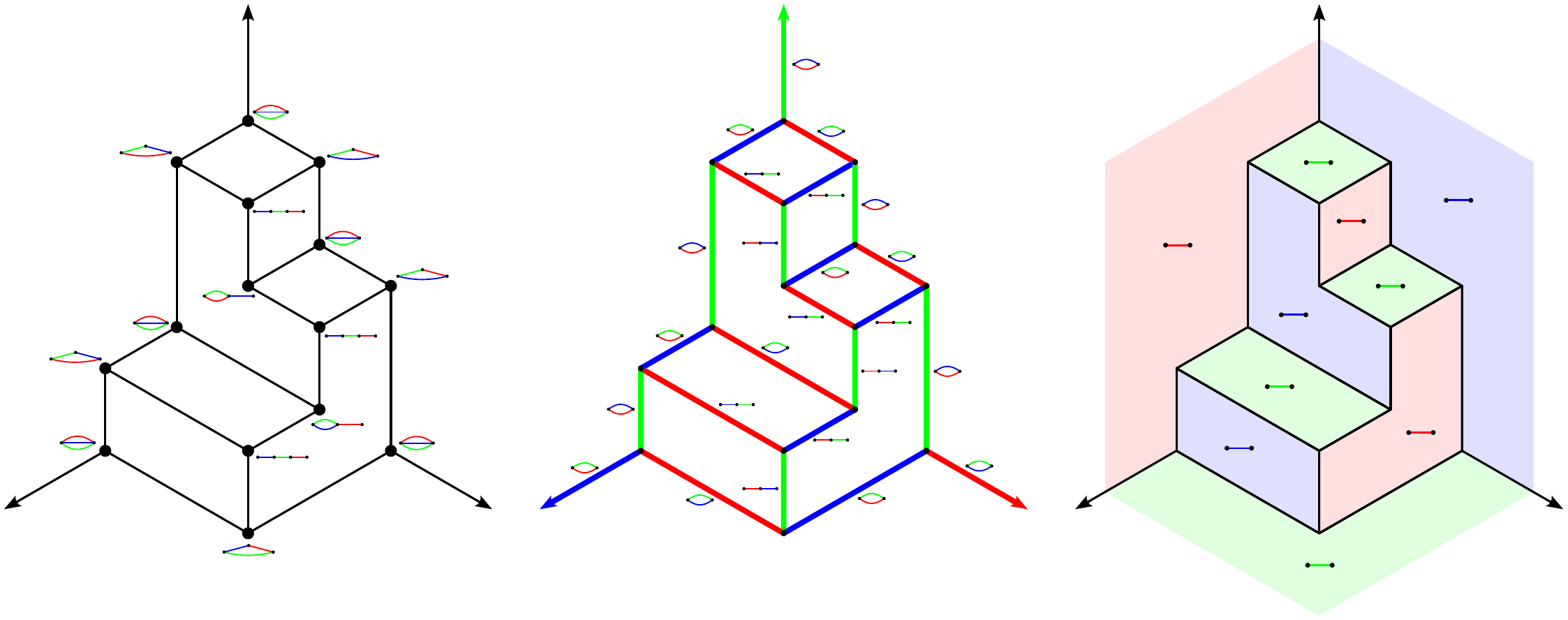}
\caption{Floral types for a 3-dimensional generic orthotope.}
\label{floraltypes}
\end{figure}

Suppose $s:\left[d\right]\rightarrow\{-1,0,1\}$
is a vector.  Define the {\it generalized orthant of $s$} as
$$\Omega_s:=\{(x_1,x_2,...,x_d):\mathrm{sign}(x_i)=s_i\hbox{ for all }i\}.$$
Notice $\Omega_s$ is homeomorphic to an open cell $\mathbb{R}^{k(s)}$, where
$k(s)=\#\left\{s_i:s_i\neq 0\right\}$, and $\mathbb{R}^d$ contains precisely 
$2^k\cdot\binom{d}{k}=2^k\cdot\frac{d!}{k!(d-k)!}$ generalized orthants
of dimension $k$.  Also, each $x=(x_1,x_2,...,x_d)\in\mathbb{R}^d$ lies in the generalized
orthant $\Omega_s$, where $s_i=\mathrm{sign}(s_i)\in\{-1,0,1\}$ for all $i$.

\section{Introductory theory and example}

If $P\subset\mathbb{R}^d$ is a bounded subset, then define
the {\it lattice point enumerator} of $P$ by
$$L(P):=\#(P\cap\mathbb{Z}^d).$$
If $P$ is an integral convex polytope and $tP$ is obtained
by uniformly dilating $P$ by the factor of $t\in\mathbb{N}$, then $L(tP)$ is well
known to be a polynomial function of $t$ called the Ehrhart polynomial of $P$,
cf.\ \cite{BR_2015,MS_2005}.
This note adapts this theory to the case when $P$ is an integral generic orthotope.

Throughout this note we use an example in 2 dimensions to illustrate the theory.
Thus, define an orthogonal polygon by 
$P:=\bigcup_{v\in S}\left(v+[0,1]^2\right),$
where
$$S=\left\{
\begin{array}{l}
(0,0),(0,1),(0,2),(0,3),(1,3),(2,1),(2,2), \\
(2,3),(3,1),(3,2),(3,3),(4,0),(4,1),(4,2), \\
(5,2),(6,1),(6,2),(6,3),(6,4) \\
\end{array}
\right\}.$$
Figure \ref{orthogoncolor} displays a sketch of $P$.  Apparently $P$ is an integral
generic orthotope with $L(P)=37$.

If $\alpha$ is a floral arrangement congruence class, then let
$L_\alpha(P)$ denote the number of lattice points $x\in P\cap\mathbb{Z}^d$ such that the
tangent cone at $x$ lies in $\alpha$.
For example, $L_{\ar}(P)$ counts the number of lattice points lying interior
to $P$ and $L_\alpha(P)$ counts the number of vertices of $P$ congruent to a member of
$\alpha$ when $\alpha$ is a floral vertex congruence class.
In the example, we distinguish the lattice points of various types by color.  Thus,
$$\begin{array}{rclc}
L_{\ar}(P) &=& 3 & \hbox{(green),} \\
L_{\arx}(P) &=& 16 & \hbox{ (black),} \\
L_{\arxx}(P) &=& 11 & \hbox{ (blue),} \\
L_{\arxxn}(P) &=& 7 & \hbox{ (red).} \\
\end{array}$$

If $t$ is a positive integer, then let $tP$ denote the image of $P$
under the dilation map $x\mapsto tx$.
That $L(tP)$ is a polynomial function of $t$ follows from well-known
results concerning Ehrhart polynomials.  Thus, 
we shall refer to $L(tP)$ as the {\it Ehrhart polynomial} of $P$.
We are interested in decomposing $L(tP)$ according to floral arrangement 
congruence classes.  Thus, we write
$$L(tP)=\sum_{\alpha}L_{\alpha}(tP),$$
where we sum over floral arrangement congruence classes $\alpha$.
As $L_\alpha(tP)$ is a polynomial function of $t$ for each $\alpha$,
let $L_{\alpha,k}(P)$ denote the coefficients such that
$$L_{\alpha}(tP)=\sum_k L_{\alpha,k}(P)t^k.$$
The coefficients $L_{\alpha,k}(P)$ for the running example appear in Figure 
\ref{lpenumerator}.  Thus, 
$$L_{\ar}(tP)=1-17t+19t^2\hbox{ and }L_{\arx}(tP)=-18+34t,$$
while $L_{\arxx}(tP)=11$ and $L_{\arxxn}(tP)=7$ are constant.
Notice that evaluating $L_{\alpha}(tP)$ at $t=1$ yields the values $L_{\alpha}(P)$ given
above.  Also notice that the degree of $L_{\alpha}(tP)$ coincides with the degree
of genericity of a tangent cone lying in the equivalence class $\alpha$.

We shall demonstrate a couple ways to
determine $L_{\alpha}(tP)$ generally.  The first is based on counting unit cubes
in $P$ while keeping track of the floral types of their tangent cones.  
The second method is the formula 
appearing in Theorem \ref{mainresult}, which uses
a set of ``local polynomials'' $h(e_\alpha)$ and $h^{-1}(e_\alpha)$ lying in a
certain associative algebra which we introduce below.  The first method is trivial,
as we are still counting lattice points in more or
less the same way.  However, this method is significant as it will facilitate the establishment
of second method.

\begin{figure} 
\centering 
\includegraphics[width=0.5\textwidth]{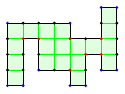}
\caption{Illustration of the running example.}
\label{orthogoncolor}
\end{figure}

\begin{figure}
$$\begin{array}{c|ccc}
\alpha & L_{\alpha,0} & L_{\alpha,1} & L_{\alpha,2}\\
\hline
\ar & 1 & -17 & 19 \\
\arx & -18 & 34 &   \\
\arxx & 11 & & \\
\arxxn & 7 & & \\
\end{array}$$
\caption{Coefficients $L_{\alpha,k}$ of the lattice point enumerator.}
\label{lpenumerator}
\end{figure}

\section{Counting cubes}

Suppose $P\subset\mathbb{R}^d$ is an integral generic orthotope.
For each floral arrangement congruence class $\alpha$ and for each integer $k\geq 0$,
let $C_{\alpha,k}(P)$ denote the number of relatively open $k$-dimensional integral unit cubes
$C\subset P$ such that the tangent cone along each point of $C$ lies in $\alpha$.
Figure \ref{localcounts} shows the values $C_{\alpha,k}(P)$ for the running example.
For each positive integer $t$, define
$$C_{\alpha}(P,t):=\sum_k C_{\alpha,k}(P)t^k.$$

\begin{figure}
$$\begin{array}{c|ccc}
\alpha & C_{\alpha,0} & C_{\alpha,1} & C_{\alpha,2}\\
\hline
\ar & 3 & 21 & 19 \\
\arx & 16 & 34 &   \\
\arxx & 11 & & \\
\arxxn & 7 & & \\
\end{array}$$
\caption{Cube counts for the example.}
\label{localcounts}
\end{figure}

Although $C_\alpha(P,t)$ does not count cubes of type $\alpha$ in the dilate
$tP$, it is closely related to the function $L_{\alpha}(tP)$:

\begin{Prp}
Suppose $P$ is an integral generic orthotope and $t$ is a positive integer.  Then
\begin{equation}
L_{\alpha}(tP)=C_{\alpha}(P,t-1).
\label{shiftrelation}
\end{equation}
\end{Prp}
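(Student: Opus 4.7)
The plan is to show that both sides of \eqref{shiftrelation} count the same set of lattice points. The key geometric fact I will use is that $P$, being an integral generic orthotope, decomposes as a disjoint union
$$P \;=\; \bigsqcup_{C\subset P} C,$$
where $C$ ranges over the relatively open integral unit cubes contained in $P$. Applying the bijective dilation $x\mapsto tx$ preserves disjointness and transforms this into the decomposition $tP = \bigsqcup_{C\subset P} tC$ of $tP$ into the scaled cubes $tC$.

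I would next count the lattice points inside each dilated cube and verify that their floral types agree with that of $C$. If $C$ has dimension $k$, then after a suitable Euclidean isometry fixing $\mathbb{Z}^d$ we may write $C = v + \{0\}^{d-k}\times(0,1)^k$ for some $v\in\mathbb{Z}^d$, hence $tC = tv + \{0\}^{d-k}\times(0,t)^k$, whose lattice points are precisely those whose $k$ free coordinates are chosen from $\{1,2,\ldots,t-1\}$, numbering exactly $(t-1)^k$. For any such lattice point $y\in tC$, the tangent cone of $tP$ at $y$ coincides with the tangent cone of $P$ at $y/t$, since the dilation $x\mapsto tx$ is a Euclidean similarity and tangent cones are unions of rays based at the apex (hence invariant under positive scaling). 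Thus the floral type of $y$ in $tP$ equals the floral type of $C$ in $P$.

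Combining these observations, the lattice points of $tP$ of type $\alpha$ are partitioned according to the unique relatively open integral unit cube $C\subset P$ for which $y\in tC$; each such $C$ of type $\alpha$ and dimension $k$ supplies exactly $(t-1)^k$ lattice points. Summing and regrouping yields
$$L_\alpha(tP) \;=\; \sum_{k\ge 0} C_{\alpha,k}(P)(t-1)^k \;=\; C_\alpha(P,t-1),$$
which is the desired identity. The most delicate step is the first one, establishing the disjoint cube decomposition of $P$; this rests on the fact that an integral generic orthotope is a union of closed integral unit cubes whose relatively open faces partition its underlying set, a structural observation drawn from the framework of \cite{richter_genericorthotopes}.
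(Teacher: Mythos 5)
Your proposal is correct and follows essentially the same route as the paper: partition $P$ into relatively open integral unit cubes, observe that the dilation carries each $k$-dimensional cube of floral type $\alpha$ to a set containing exactly $(t-1)^k$ lattice points whose tangent cones lie in $\alpha$, and sum over cubes. Your version simply spells out the coordinate count and the scale-invariance of tangent cones, which the paper leaves implicit.
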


\begin{proof}
Suppose $C\subset P$
is a relatively open integral unit cube of dimension $k$ and $t$ is a positive integer. 
Then there are precisely $(t-1)^k$ lattice points $x\in tC$ such that the tangent cone
at $x$ coincides with the tangent cone at each point of $C$.
Using the partition of $P$ into relatively
open integral unit cubes (of varying dimensions), the relation (\ref{shiftrelation}) follows.
\end{proof}

Figure \ref{dilation} displays the result of dilating the running example
by the factor $t=3$.  Notice that if $C\subset P$ is a relatively open integral unit square, for example, then
the interior of $3C$ contains precisely $(3-1)^2=4$ lattice points.

Using the binomial theorem,
we obtain a relation between the coefficients $L_{\alpha,k}(P)$ and
the counts $C_{\alpha,k}(P)$:

\begin{Cor}  Suppose $P$ is an integral generic orthotope, $\alpha$ is a floral arrangement,
and $k\in\{0,1,2,...,\deg\alpha\}$.  Then
$$L_{\alpha,k}(P)=\sum_{j=k}^{\deg\alpha}(-1)^{j+k}\binom{j}{k}C_{\alpha,j}(P).$$
\label{clrelation}
\end{Cor}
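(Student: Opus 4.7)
The plan is to apply the binomial theorem to the identity from the preceding proposition and then compare coefficients of powers of $t$. Starting from $L_\alpha(tP)=C_\alpha(P,t-1)$, I would expand
$$C_\alpha(P,t-1)=\sum_{j} C_{\alpha,j}(P)(t-1)^j=\sum_{j} C_{\alpha,j}(P)\sum_{k=0}^{j}\binom{j}{k}(-1)^{j-k}t^k,$$
interchange the two summations, and collect the coefficient of $t^k$:
$$L_\alpha(tP)=\sum_{k}\Bigl(\sum_{j\geq k}(-1)^{j-k}\binom{j}{k}C_{\alpha,j}(P)\Bigr)t^k.$$
Matching this with the defining expansion $L_\alpha(tP)=\sum_k L_{\alpha,k}(P)t^k$ and using $(-1)^{j-k}=(-1)^{j+k}$ immediately yields the displayed formula, except that the outer sum as written extends over all $j\geq k$ rather than stopping at $\deg\alpha$.

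The remaining task, and the only nontrivial step, is to justify truncating the outer sum at $j=\deg\alpha$, i.e.\ to show $C_{\alpha,j}(P)=0$ whenever $j>\deg\alpha$. For this I would argue directly from the definition of floral type: if $C$ is a relatively open $j$-dimensional integral unit cube on which the tangent cone is everywhere congruent to $\alpha$, then $\alpha$ is invariant under translations along the $j$-dimensional affine hull of $C$, so its underlying floral arrangement must contain an $\mathbb{R}^j$ factor. Invoking the structural description recalled just before Figure \ref{floraltypes}, write $\alpha$ as congruent to $\beta\times\mathbb{R}^{d-k}$ for some floral vertex $\beta$ of dimension $k$; the relation $\deg\alpha=d-k$ then forces $\deg\alpha\geq j$, so $C_{\alpha,j}(P)$ vanishes as soon as $j>\deg\alpha$.

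The main obstacle is really just this last dimension bound, which is a bookkeeping point rather than a computation; once it is in hand, the corollary is a one-line consequence of the binomial expansion of $(t-1)^j$ applied to the preceding proposition.
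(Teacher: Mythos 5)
Your proposal is correct and matches the paper's (implicit) argument: the corollary is obtained exactly by applying the binomial theorem to $L_\alpha(tP)=C_\alpha(P,t-1)$ and comparing coefficients of $t^k$. Your additional justification that $C_{\alpha,j}(P)=0$ for $j>\deg\alpha$, via the structure $\alpha\cong\beta\times\mathbb{R}^{d-\dim\beta}$ and $\deg\alpha=d-\dim\beta\geq j$, is a correct filling-in of a bookkeeping point the paper leaves implicit.
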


\begin{figure} 
\centering 
\includegraphics[width=\textwidth]{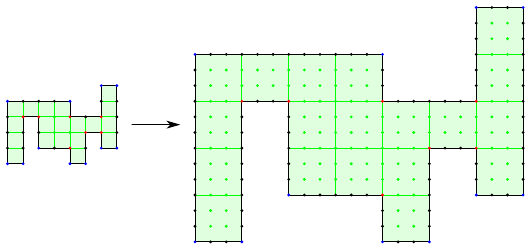}
\caption{Dilating the example.}
\label{dilation}
\end{figure}

For the running example, we compute
$$L_{\ar}(tP)=C_{\arx}(P,t-1)=3+21(t-1)+19(t-1)^2=1-17t+19t^2$$
and
$$L_{\arx}(tP)=C_{\arx}(P,(t-1))=16+34(t-1)=-18+34t,$$
yielding the first two rows of the table in Figure \ref{lpenumerator}.

\section{Local polynomials}

This section introduces and studies a set of ``local polynomials''
$h(e_\alpha)$, defined for each floral vertex congruence class $\alpha$.  
These are instrumental in
the relation, stated as Theorem \ref{mainresult}, between the number 
of lattice points and the numbers
of relatively open integral unit cubes of 
various floral types in an integral generic orthotope.

Each local polynomial has an expression
$$h(e_\alpha):=\sum_{\beta}m_{\alpha,\beta}e_\beta,$$
summing over floral vertex congruence classes $\beta$.
Here, $m_{\alpha,\beta}$ is an integer that measures the occurrence of
floral vertices congruent to $\beta$ in $\alpha$.
As such, their definition requires
careful analysis of faces and cross-sections of floral vertices, thus explaining the length
of this section.

We call the expressions $h(e_\alpha)$ ``polynomials'' because they lie 
in an infinite-dimensional, commutative, unital associative algebra $\mathcal{F}$ which 
behaves much like a ring of polynomials.  
(We note that the algebra
$\mathcal{F}$, while containing a ring of polynomials in one variable as a subring,
has two different ways to multiply.  Thus, although $\mathcal{F}$ qualifies as
having the structure of
an associative algebra, this terminology neglects 
this additional structure.)
We call the expressions $h(e_\alpha)$ ``local'' because they contain information about
incidences of floral arrangements in a given floral vertex.

A basis for $\mathcal{F}$ as a vector space consists of all expressions
$e_\alpha$, where $\alpha$ is a floral vertex congruence class,
together with $e_{\ar}$.
The multiplication in $\mathcal{F}$ is given by the rule
$$e_\alpha\cdot e_\beta:=e_{\alpha\wedge\beta},$$
and the element $e_{\ar}$ serves as the multiplicative identity element.
Notice that there is a unique monomorphism 
$\mathbb{Q}[t]\hookrightarrow\mathcal{F}$ which maps $1\mapsto e_{\ar}$ and
$t\mapsto e_{\arx}$.
(We assume throughout that the field of scalars is $\mathbb{Q}$.)

The definition of the coefficients $m_{\alpha,\beta}$ is facilitated by the observation,
which was established in \cite{richter_genericorthotopes}:

\begin{Prp}
Suppose $\alpha$ is a floral vertex and $x\in\alpha$.
Then there is a floral vertex $\beta$ such that the tangent cone at $x$
is congruent to $\beta\times\mathbb{R}^{d-\dim\beta}$.
\end{Prp}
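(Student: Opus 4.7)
The plan is to read off the tangent cone at $x$ directly from the stratification of $\alpha$ into relatively open polyhedral faces, and then identify this tangent cone with a product of a lower-dimensional floral vertex and a linear subspace.

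First I would observe that $\alpha$, as an orthogonal conic polyhedron, decomposes into relatively open faces, and $x$ lies in the relative interior of a unique such face $F$. On the relative interior of $F$ the tangent cone $T_x\alpha$ is constant, and because the affine hull of $F$ passes through $x$, it splits as a Minkowski sum $T_x\alpha = V + K$, where $V$ is the linear span of $F - x$ and $K$ is the quotient tangent cone transverse to $V$. Since $\alpha$ is an orthogonal cone, every supporting half-space of $\alpha$ is coordinate-aligned, and hence $V$ is itself spanned by a subset of the standard basis. Consequently the Minkowski splitting may be upgraded to a Cartesian product $T_x\alpha \cong K \times V$ after reindexing coordinates, with $\dim V = \dim F$.

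Second I would argue that $K$ is itself a floral vertex. The read-once Boolean function defining $\alpha$ has one literal for each coordinate half-space supporting $\alpha$. Passing to a point in the relative interior of $F$ amounts to partially evaluating this function: literals corresponding to half-spaces whose boundary contains $x$ remain free, while those corresponding to half-spaces that are slack at $x$ are set to their ``true'' value. The series-parallel / read-once calculus developed in \cite{richter_genericorthotopes} guarantees that such a partial evaluation yields another read-once Boolean function, and the cone it defines is precisely $K$. Taking $\beta$ to be the congruence class of $K$, we obtain $\dim\beta = d - \dim V$ and therefore $T_x\alpha \cong \beta \times \mathbb{R}^{d-\dim\beta}$, as desired.

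The main obstacle is the second step, since it is not \emph{a priori} obvious that partial evaluation preserves the read-once property: in principle, fixing certain literals could force simplifications that duplicate or repeat other literals in the resulting formula. One must verify, by case analysis on the series and parallel connectives that build up the defining series-parallel diagram, that the literals remaining after partial evaluation each appear exactly once. This verification is precisely the face/cross-section calculus carried out in \cite{richter_genericorthotopes}, so once that machinery is invoked the proposition follows immediately.
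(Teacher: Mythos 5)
This proposition is not proved in the paper at all: it is stated as an observation ``established in \cite{richter_genericorthotopes}'' and simply imported, so there is no in-paper argument to compare yours against. Judged on its own terms, your reconstruction is sound and is in the spirit of the cited face/cross-section calculus: stratify $\alpha$, note the tangent cone is constant on the relatively open face $F$ containing $x$, split off the coordinate-spanned lineality space $V$, and identify the transverse cone $K$ with the cone of a partially evaluated read-once formula.

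Two remarks on the balance of difficulty in your write-up. The step you flag as the ``main obstacle'' is actually the easy part: a restriction of a read-once formula (substituting constants for some literals and simplifying) can never duplicate a surviving literal, since simplification only deletes subformulas; so partial evaluation trivially preserves read-once-ness, no case analysis on the connectives is needed for that point. The part that deserves the care is the one you pass over quickly: after simplification the formula may fail to depend on some of the ``free'' coordinates (those $i$ with $x_i=0$), e.g.\ for $(1\cup 2)\cap 3$ at a point with $x_1>0$, $x_2=x_3=0$ the formula collapses to the single half-space $3$. One must check that these newly irrelevant directions, together with the coordinates where $x_i\neq 0$, are exactly the lineality space $V$ of $T_x\alpha$ and have the same dimension as the face $F$, so that $\beta$ is a genuine floral vertex on the remaining coordinates and $\dim V=d-\dim\beta$. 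Your face-based splitting does handle this correctly (the face through $(1,0,0)$ in that example is the two-dimensional facet in $\{x_3=0\}$, not a one-dimensional stratum of the coordinate arrangement), but that identification, rather than read-once preservation, is the substantive content supplied by \cite{richter_genericorthotopes}.
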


With this, say
that $x\in\alpha$ {\it has floral type $\beta$} if the tangent cone at $x$ 
is congruent to $\beta\times\mathbb{R}^{d-k}$, where $\beta$ is a $k$-dimensional
floral vertex.
If $x$ lies in the interior of $\alpha$, then we decree that $x$ has floral type $\ar$.

Recalling \cite{richter_genericorthotopes}, we may assert:

\begin{Prp}
Suppose $\alpha$ is a floral vertex and $x\in\alpha$.
(a) If $x$ lies in the relative interior of an edge $e$ of $\alpha$,
then the floral type of $x$ coincides with the cross-section of $\alpha$ across $e$.
(b) If $x$ lies in the relative interior of a facet of $\alpha$, then $x$ has floral type $\arx$.
(c) The origin $x=\mathbf{0}$ has floral type $\alpha$.
\end{Prp}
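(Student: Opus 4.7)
The plan is to analyze each claim by computing the tangent cone of $\alpha$ at a point $x$ of the specified location, using the fact that $\alpha$ is a $d$-dimensional cone with apex at $\mathbf{0}$, supported on a collection of coordinate half-spaces encoded by a read-once Boolean function, as developed in \cite{richter_genericorthotopes}.

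For part (c), I would observe that $\alpha$ is invariant under positive dilation about $\mathbf{0}$, so the tangent cone at $\mathbf{0}$ equals $\alpha$ itself; since $\dim\alpha=d$, the product factor $\mathbb{R}^{d-\dim\alpha}$ is trivial and the floral type is $\alpha$. For part (b), if $x$ lies in the relative interior of a facet $F$, then $F$ lies in a single coordinate hyperplane $H$, and a sufficiently small ball about $x$ meets only $H$ among the supporting hyperplanes of $\alpha$; hence locally $\alpha$ is a half-ball and the tangent cone at $x$ is the half-space bounded by $H$ that contains $\alpha$. This half-space is congruent to $\arx\times\mathbb{R}^{d-1}$, so $x$ has floral type $\arx$.

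For part (a), let $e$ be an edge of $\alpha$ and $x$ a relative interior point of $e$. My plan is to choose a hyperplane $H$ through $x$ transverse to $e$ and argue that a neighborhood of $x$ in $\alpha$ decomposes as a product of the cross-section $\alpha\cap H$ with a short segment of $e$. Taking tangent cones then yields $\beta\times\mathbb{R}$, where $\beta$ is the cross-section, so the floral type of $x$ equals $\beta$. The main obstacle is justifying both (i) that the cross-section is actually a $(d-1)$-dimensional floral vertex (rather than merely a convex cone) and (ii) that the local product decomposition is valid; both are handled by invoking the cross-section machinery for floral vertices from \cite{richter_genericorthotopes}, together with the observation that translation along $e$ is a local affine symmetry of $\alpha$ in a neighborhood of $x$, so all sufficiently close transverse cross-sections are congruent and the floral type is well defined.
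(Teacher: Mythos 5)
Your parts (a) and (c) are in line with what the paper intends: the paper itself offers no argument at all here (it simply recalls the statement from \cite{richter_genericorthotopes}), and your treatment of (c) via cone-invariance at the apex, and of (a) by deferring to the cross-section machinery of the foundational paper together with the local product structure along an edge, is the expected route. The one place where your reasoning, as written, would fail is part (b). You assert that if $x$ lies in the relative interior of a facet contained in the coordinate hyperplane $H$, then a sufficiently small ball about $x$ meets only $H$ among the supporting hyperplanes of $\alpha$. That is false for non-convex floral vertices, because facets of $\alpha$ need not be convex and their relative interiors can meet other bounding hyperplanes. Concretely, take $\alpha=\arxxnxn$, i.e.\ $\{x_1\geq 0,\ x_2\geq 0\}\cup\{x_3\geq 0\}$ in $\mathbb{R}^3$. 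The facet lying in $\{x_3=0\}$ is $\{x_3=0\}\setminus\{x_1>0,\ x_2>0\}$, and the point $(0,-1,0)$ belongs to its relative interior while also lying on the supporting hyperplane $\{x_1=0\}$; every ball about this point meets two supporting hyperplanes, so your stated justification collapses.

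The conclusion of (b) is still true at such points, but for a different reason: membership in $\alpha$ near $x$ is governed by the read-once Boolean function after the coordinates of $x$ with nonzero sign are frozen at their truth values, and at a relative-interior point of a facet exactly one of the remaining (zero) coordinates is essential for that restricted function. In the example, once $x_2<0$ is fixed false, the condition $(x_1\wedge x_2)\vee x_3$ reduces to $x_3$, so the hyperplane $\{x_1=0\}$, although it passes through $x$, is locally irrelevant and the tangent cone is the half-space $\{x_3\geq 0\}$, of floral type $\arx$. So the correct local argument is not ``only one hyperplane meets a small ball'' but ``only one half-space remains essential in the residual read-once function''; with that repair, your proof of (b) goes through, and the same caution should be kept in mind when you invoke the transverse-section argument in (a), where the relative-interior point of an edge can likewise lie on hyperplanes that are locally inessential.
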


Suppose $\alpha$ and $\beta$ are floral vertices with $\dim\alpha=d$ and
$\dim\beta=k$.  From the results above, we may define 
$$m_{\alpha,\beta}:=\sum_f \mu_{d-k}(f),$$
summing over all $(d-k)$-dimensional faces $f$ which have floral type
$\beta$, and $\mu_{d-k}$ is the $(d-k)$-dimensional orthant-counting function 
(defined in \cite{richter_genericorthotopes}).  In other words, $m_{\alpha,\beta}$ is the number of 
$(d-k)$-dimensional generalized orthants $\Omega\subset\alpha$
such that each point of $\Omega$ has floral type $\beta$.
Apparently we have $m_{\alpha',\beta'}=m_{\alpha,\beta}$ whenever $\alpha'$
is congruent to $\alpha$ or $\beta'$ is congruent to $\beta$.  Thus, we use
$m_{\alpha,\beta}$ to denote this common value when $\alpha$ and $\beta$
are floral vertex congruence classes.

The table in Figure \ref{volumepolys} displays the polynomials $h(e_\alpha)$ for $\dim\alpha\leq 3$.
Inspecting Figure \ref{good3dvertices}, for example, we may see
$$m_{\arxxnxn,\arx}=5,$$
as the floral vertex defined by $\arxxnxn$ has precisely 5 two-dimensional generalized orthants
with floral type $\beta=\arx$. 
The polynomials $h(e_\alpha)$ when $\dim\alpha=4$
appear in a table in the appendix.

\begin{figure} 
\centering 
\includegraphics[width=0.75\textwidth]{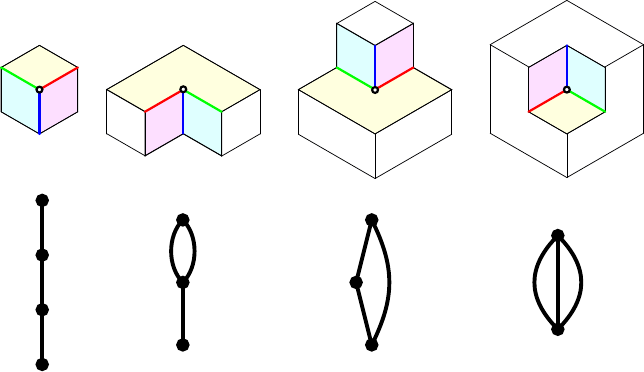}
\caption{Three-dimensional floral vertices.}
\label{good3dvertices}
\end{figure}

\begin{figure}
$$\begin{array}{c|c}
\alpha & h(e_\alpha) \\
\hline
\ar & (\ar) \\
\arx & (\ar)+(\arx) \\
\arxx & (\ar)+2(\arx)+(\arxx) \\
\arxxn & 3(\ar)+2(\arx)+(\arxxn) \\
\arxxx & (\ar)+3(\arx)+3(\arxx)+(\arxxx) \\
\arxxnx & 3(\ar)+5(\arx)+2(\arxx)+(\arxxn)+(\arxxnx) \\
\arxxnxn & 5(\ar)+5(\arx)+(\arxx)+2(\arxxn)+(\arxxnxn) \\
\arxxxn & 7(\ar)+3(\arx)+3(\arxxn)+(\arxxxn) \\
\end{array}$$
\caption{Polynomials $h(e_\alpha)$ for $\dim\alpha\leq 3$.}
\label{volumepolys}
\end{figure}

The polynomials $h(e_\alpha)$ and the coefficients $m_{\alpha,\beta}$ enjoy several properties
which we need in order to state and prove Theorem \ref{mainresult}.
Like the results in \cite{richter_genericorthotopes}, all of these properties are established 
using elementary means.  

\begin{Prp}
Suppose $\alpha$ and $\beta$ are floral vertices.
If $\dim\beta>\dim\alpha$, then no point of $\alpha$ has floral type $\beta$.
\end{Prp}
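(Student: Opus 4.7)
The plan is to argue by looking at which bounding hyperplanes of $\alpha$ actually pass through a given point $x$, and show that this bounds the ``essential dimension'' of the tangent cone. Concretely, since $\alpha$ is a $d$-dimensional floral vertex with $d=\dim\alpha$, it lives in $\mathbb{R}^d$ and is obtained as a read-once Boolean combination of the $d$ coordinate half-spaces $\{x_i\geq 0\}$ or $\{x_i\leq 0\}$ (one for each of the $d$ segments of its series-parallel diagram). In particular, every bounding hyperplane of $\alpha$ is one of the coordinate hyperplanes $H_i=\{x_i=0\}$, and $\alpha$ is a cone based at the origin.

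Next I would fix $x\in\alpha$ and let $J:=\{i\in[d]:x_i=0\}$, so $j:=|J|$ is the number of coordinate hyperplanes through $x$. For $i\notin J$ the coordinate $x_i$ is nonzero, so in a neighborhood of $x$ the half-space constraint indexed by $i$ is strict and contributes nothing locally; whereas for $i\in J$ the corresponding constraint is active. The tangent cone at $x$ therefore depends only on the constraints indexed by $J$ and is free in the remaining $d-j$ coordinate directions. Writing this out, the tangent cone at $x$ has the form $C\times\mathbb{R}^{d-j}$, where $C$ is a cone in the coordinate subspace $\mathbb{R}^J$ cut out by (at most) $j$ half-space constraints arising as a cross-section of the series-parallel diagram of $\alpha$.

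Now suppose $x$ has floral type $\beta$, so by the definition preceding the proposition the tangent cone at $x$ is congruent to $\beta\times\mathbb{R}^{d-k}$ with $k=\dim\beta$. Comparing this with the factorization $C\times\mathbb{R}^{d-j}$ obtained above, the ``essential'' factor $\beta$ must be congruent to a cone obtainable from $C$ after stripping its trivial Euclidean factors. Since $C$ is cut out by at most $j$ half-space constraints, any such essential factor can involve at most $j$ half-spaces, and hence $k=\dim\beta\le j\le d=\dim\alpha$. Contrapositively, if $\dim\beta>\dim\alpha$ then no $x\in\alpha$ can have floral type $\beta$.

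The main obstacle, and really the only non-bookkeeping step, is justifying that the tangent cone truly factors as $C\times\mathbb{R}^{d-j}$ with $C$ coming from a sub-diagram of $\alpha$'s series-parallel diagram indexed by $J$. This is essentially the cross-section/face description of floral vertices recalled from \cite{richter_genericorthotopes}, together with the observation that inactive half-spaces may be discarded locally. Everything else is a direct count of segments in a sub-diagram versus the parent diagram.
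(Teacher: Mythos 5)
Your argument is correct: localizing at $x$, discarding the half-space constraints that are strictly satisfied or strictly violated near $x$, and observing that the tangent cone then factors as a cone on the at most $j\le\dim\alpha$ active half-spaces times a trivial Euclidean factor is exactly the elementary cross-section reasoning the paper relies on (it states this proposition without a written proof, as one of the properties ``established using elementary means''). The only step you gloss over is that the essential factor $\beta$ is well defined, i.e.\ that a floral vertex has no trivial $\mathbb{R}$-factor so that $\dim\beta$ can be read off from any factorization of the tangent cone; this is implicit in the definition of a floral vertex (every segment of its series-parallel diagram is essential) and does not affect the correctness of your proof.
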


A corollary to this is that $m_{\alpha,\beta}=0$ when $\dim\beta>\dim\alpha$,
and $m_{\alpha,\beta}=0$ for all but a finite number of $\beta$ for each $\alpha$.
By a similar token, if $\dim\alpha=\dim\beta$, then the values of $m_{\alpha,\beta}$
are quickly determined:

\begin{Prp}
Suppose $\alpha$ and $\beta$ are floral vertex congruence classes with
$\dim\alpha=\dim\beta$.  Then 
$$m_{\alpha,\beta}=
\left\{\begin{array}{ccl}
1 & \hbox{if} & \beta=\alpha, \\
0 & \hbox{if} & \beta\neq\alpha. \\
\end{array}\right.$$
\end{Prp}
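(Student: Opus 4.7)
The plan is to unwind the definition of $m_{\alpha,\beta}$ directly, exploiting the fact that the equal-dimension hypothesis forces the relevant faces to have dimension zero. Writing $\dim\alpha = \dim\beta = k$, the formula $m_{\alpha,\beta} = \sum_f \mu_{d-k}(f)$ (with $d = \dim\alpha$, so $d = k$) has $d - k = 0$, so the sum ranges only over the $0$-dimensional faces of $\alpha$ whose floral type is $\beta$.

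First I would note that a floral vertex, interpreted as a cone with apex at the origin, has exactly one $0$-dimensional face, namely $\{\mathbf{0}\}$ itself; this is immediate from the series-parallel cone structure recalled in \cite{richter_genericorthotopes}, since any other face is a ray or higher-dimensional cone emanating from the apex. Next, by part (c) of the preceding proposition, the origin $\mathbf{0} \in \alpha$ has floral type $\alpha$. Therefore the unique $0$-dimensional face of $\alpha$ contributes to the sum defining $m_{\alpha,\beta}$ precisely when $\beta = \alpha$ and not otherwise. Finally, I would observe that $\mu_0$ evaluated at a single point equals $1$, because $\mathbb{R}^d$ contains exactly one $0$-dimensional generalized orthant (the origin itself), and this is the convention for $\mu_0$ set in \cite{richter_genericorthotopes}. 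Combining these three points: when $\beta \neq \alpha$ the defining sum is empty, yielding $m_{\alpha,\beta} = 0$, while when $\beta = \alpha$ the sum contains exactly one term of value $1$, yielding $m_{\alpha,\alpha} = 1$.

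The main obstacle is really only a verification rather than a genuine difficulty: one must confirm that the orthant-counting convention $\mu_0(\mathbf{0}) = 1$ of \cite{richter_genericorthotopes} is indeed the intended normalization, and that the notion of "$0$-dimensional face" of a floral vertex is consistent with the face-lattice conventions used in the definition of $m_{\alpha,\beta}$. Once these matches of convention are in hand, the argument collapses to pure bookkeeping on dimensions and part (c) of the preceding proposition.
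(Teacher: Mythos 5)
Your proof is correct and follows essentially the same route the paper intends: the paper states this proposition without proof as an immediate consequence of the definition of $m_{\alpha,\beta}$, and your unwinding (with $d-k=0$ the only relevant generalized orthant is $\{\mathbf{0}\}$, which has floral type $\alpha$ by part (c), and $\mu_0$ contributes $1$) is exactly that intended bookkeeping.
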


Recall from \cite{richter_genericorthotopes} that $\overline{\alpha}$ denotes 
the floral vertex complementary to $\alpha$.  Since
$m_{\alpha,\ar}=\mu_d(\alpha)$ coincides with the number of $(\dim\alpha)$-dimensional
orthants occupied by $\alpha$, this yields:

\begin{Prp}
If $\alpha$ is a floral vertex congruence class with $\alpha\neq\ar$, then
$m_{\alpha,\ar}+m_{\overline{\alpha},\ar}=2^{\dim\alpha}$.
\end{Prp}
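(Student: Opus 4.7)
The plan is to reduce the identity to a clean combinatorial partition of orthants, using the observation that precedes the proposition: $m_{\alpha,\ar}=\mu_d(\alpha)$ counts the number of $d$-dimensional generalized orthants occupied by $\alpha$, where $d=\dim\alpha$. The same holds for $\overline{\alpha}$, since complementation of a read-once Boolean function preserves the underlying set of supporting half-spaces and therefore the dimension: $\dim\overline{\alpha}=\dim\alpha$. So the claim reduces to showing
\[
\mu_d(\alpha)+\mu_d(\overline{\alpha})=2^d.
\]

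The key step is to verify that $\alpha$ and $\overline{\alpha}$ partition, up to the coordinate hyperplanes, the ambient $\mathbb{R}^d$ into its $2^d$ open orthants. A floral vertex supported on $d$ half-spaces is cut out by a read-once Boolean expression $F$ in the indicator variables of the $d$ coordinate half-spaces. Because $\alpha$ has dimension $d$, every one of these $d$ half-space indicators is a genuine variable of $F$, so for any generic point $x$ lying in the interior of a $d$-dimensional open orthant $\Omega_s$, the indicator variables are all assigned unambiguous values in $\{0,1\}$ and $F$ evaluates to either $0$ or $1$. The complementary vertex $\overline{\alpha}$ is defined by $\neg F$, hence $\Omega_s\subset\alpha$ or $\Omega_s\subset\overline{\alpha}$, and the two possibilities are mutually exclusive. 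Consequently the $2^d$ open orthants partition cleanly between $\alpha$ and $\overline{\alpha}$, yielding the desired sum $\mu_d(\alpha)+\mu_d(\overline{\alpha})=2^d$.

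The only subtle point — and the main obstacle — is justifying the dichotomy ``$\Omega_s\subset\alpha$ or $\Omega_s\subset\overline{\alpha}$'' from the definitions given in \cite{richter_genericorthotopes}: one must know that a floral vertex is the closed region of the Boolean function and that complementation of the read-once expression is precisely what is meant by $\overline{\alpha}$. Once the framework from \cite{richter_genericorthotopes} is invoked, the argument is a one-line counting identity. The hypothesis $\alpha\neq\ar$ is used exactly to guarantee $\dim\alpha\geq 1$, so that orthants of the correct dimension are available to be counted and so that $\overline{\alpha}$ is itself a nontrivial floral vertex rather than a degenerate object.
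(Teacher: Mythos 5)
Your argument is correct and matches the paper's (implicit) justification: the paper presents this as an immediate consequence of the identification $m_{\alpha,\ar}=\mu_{\dim\alpha}(\alpha)$ together with the fact that $\alpha$ and its complementary vertex $\overline{\alpha}$ share their interiors' orthants between them, which is exactly the partition you verify via negation of the read-once expression. No substantive difference in approach.
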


Similarly, that $\partial\overline{\alpha}=\partial\alpha=\alpha\cap\overline{\alpha}$ for all $\alpha$ gives us:

\begin{Prp}
Suppose $\alpha\subset\mathbb{R}^d$ is a floral vertex and $x\in\partial\alpha$.
Then $x$ has floral type $\beta$ in $\alpha$ if and only if $x$ has floral type
$\overline{\beta}$ in $\overline{\alpha}$. 
\end{Prp}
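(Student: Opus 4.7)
The plan is to show that floral complementation commutes with the tangent cone operation at any boundary point $x\in\partial\alpha$, after which the statement follows directly. By the defining property of the floral complement recalled from \cite{richter_genericorthotopes}, the two floral vertices $\alpha$ and $\overline{\alpha}$ together cover the ambient space $\mathbb{R}^{\dim\alpha}$ and overlap exactly on $\partial\alpha=\alpha\cap\overline{\alpha}$. Near a point $x\in\partial\alpha$, each floral vertex locally coincides with a union of closed orthant-wedges (in appropriate coordinates) matching its tangent cone, and the wedges making up $\overline{\alpha}$ are exactly those not occupied by $\alpha$. Hence the tangent cone $T_x\overline{\alpha}$ equals the closure of $\mathbb{R}^d\setminus T_x\alpha$; that is, $T_x\overline{\alpha}$ is the floral complement of $T_x\alpha$.

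Now suppose $x$ has floral type $\beta$ in $\alpha$, where $\beta$ is a $k$-dimensional floral vertex. Then $T_x\alpha$ is congruent to $\beta\times\mathbb{R}^{d-k}$. A direct verification shows
$$\overline{\beta\times\mathbb{R}^{d-k}}=\overline{\beta}\times\mathbb{R}^{d-k},$$
since $\mathbb{R}^d\setminus(\beta\times\mathbb{R}^{d-k})=(\mathbb{R}^k\setminus\beta)\times\mathbb{R}^{d-k}$, whose closure is $\overline{\beta}\times\mathbb{R}^{d-k}$. Combined with the preceding step, this gives that $T_x\overline{\alpha}$ is congruent to $\overline{\beta}\times\mathbb{R}^{d-k}$, so $x$ has floral type $\overline{\beta}$ in $\overline{\alpha}$. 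The converse direction follows from applying the same reasoning with $\overline{\alpha}$ in place of $\alpha$, using the involutive identities $\overline{\overline{\alpha}}=\alpha$ and $\overline{\overline{\beta}}=\beta$.

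The main obstacle is carefully justifying the first step, namely the commutation of the tangent cone with floral complementation at $x\in\partial\alpha$. The argument is local: one must check that a floral vertex, viewed in a neighborhood of a boundary point, does coincide with the orthant decomposition suggested by its tangent cone, and that this decomposition is preserved by set-theoretic complementation followed by closure. The key input from \cite{richter_genericorthotopes} is the earlier proposition classifying every tangent cone of a floral vertex as a product $\beta\times\mathbb{R}^{d-k}$, which is precisely what enables the product-preserving complementation computation above.
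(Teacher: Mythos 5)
Your proposal is correct and follows essentially the same route as the paper, which deduces the proposition directly from the complementarity relation $\partial\overline{\alpha}=\partial\alpha=\alpha\cap\overline{\alpha}$; you have simply made explicit the two facts the paper leaves implicit, namely that the tangent cone at a common boundary point commutes with complementation (closure of the set-theoretic complement) and that $\overline{\beta\times\mathbb{R}^{d-k}}=\overline{\beta}\times\mathbb{R}^{d-k}$. No gap beyond the level of detail the paper itself omits.
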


As a corollary, we have $m_{\overline{\alpha},\overline{\beta}}=m_{\alpha,\beta}$ whenever
$\beta\neq\ar$.

Recall from \cite{richter_genericorthotopes} that if $\alpha$ and $\beta$
are floral vertices, then $\alpha\times\beta$ is a floral vertex
given by the conjunction of the series-parallel diagrams underlying
$\alpha$ and $\beta$.  This in turn implies that
$$h(e_{\alpha}\cdot e_{\beta})= h(e_{\alpha\wedge\beta})=h(e_\alpha)\cdot h(e_\beta)$$
for all floral vertex congruence classes $\alpha$ and $\beta$.
Since $h(e_{\ar})=e_{\ar}$ we may say:

\begin{Prp}
The map $h:\mathcal{F}\rightarrow\mathcal{F}$ is an algebra automorphism.
\end{Prp}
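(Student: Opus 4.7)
The plan is to verify the three ingredients of being an algebra automorphism: linearity, multiplicativity, and bijectivity. Linearity holds by definition of $h$ on the basis $\{e_\alpha\}$ followed by linear extension. Multiplicativity on basis elements, namely $h(e_\alpha \cdot e_\beta) = h(e_\alpha) \cdot h(e_\beta)$, and preservation of the multiplicative identity $e_{\ar}$ are both already explicitly recorded in the paragraph preceding the statement; multiplicativity then extends to all of $\mathcal{F}$ by bilinearity. So the only substantive content remaining is bijectivity of $h$.

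For bijectivity, I would exploit the dimension filtration of $\mathcal{F}$. Let $V_n \subset \mathcal{F}$ denote the subspace spanned by those basis vectors $e_\alpha$ with $\dim\alpha \leq n$. The earlier proposition asserting that $m_{\alpha,\beta} = 0$ whenever $\dim\beta > \dim\alpha$ guarantees that $h(V_n) \subseteq V_n$. The subsequent proposition, which says $m_{\alpha,\alpha} = 1$ and $m_{\alpha,\beta} = 0$ when $\dim\alpha = \dim\beta$ and $\alpha \neq \beta$, ensures that after ordering a basis of $V_n$ by increasing dimension, the matrix representing $h|_{V_n}$ is lower unitriangular. Such a unitriangular endomorphism of a finite-dimensional vector space is automatically invertible, so $h|_{V_n}$ is a bijection for every $n$; because $\mathcal{F} = \bigcup_n V_n$, the map $h$ is a bijection on all of $\mathcal{F}$, and its inverse is computed level-by-level by the usual back-substitution recursion.

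The principal obstacle is not technical difficulty but a small point of bookkeeping: one needs to know that within each fixed dimension $n$, only finitely many floral vertex congruence classes arise, so that $V_n$ is finite-dimensional and the notion of ``unitriangular'' is meaningful. This finiteness should follow from the combinatorial correspondence between floral vertices and read-once Boolean functions on a bounded number of variables, modulo the action of $BC_d$, of which only finitely many exist in each fixed arity. Once that finiteness is granted, the triangular structure argument is immediate, and one obtains as a bonus the existence of the inverse basis $\{h^{-1}(e_\alpha)\}$ referenced earlier in the introduction.
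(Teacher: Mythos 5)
Your proposal is correct and follows essentially the same route as the paper: multiplicativity and preservation of $e_{\ar}$ are taken from the product formula $h(e_{\alpha}\cdot e_{\beta})=h(e_{\alpha\wedge\beta})=h(e_\alpha)\cdot h(e_\beta)$, and bijectivity comes from the dimension filtration, where your unitriangularity observation (using $m_{\alpha,\beta}=0$ for $\dim\beta>\dim\alpha$ and $m_{\alpha,\alpha}=1$) is exactly the paper's statement that $h-\mathrm{Id}$ is nilpotent on each $\mathcal{F}_d$, yielding $h^{-1}=\sum_{k}(-1)^k(h-\mathrm{Id})^k$. The finiteness you flag is already supplied by the paper's count $\dim\mathcal{F}_d=\sum_{k=0}^d A_k$ with $A_k$ the (finite) number of series-parallel diagrams on $k$ segments.
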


We illustrate how the properties outlined above serve to compute small examples
of the expressions $h(e_\alpha)$.  For example, we may write:
\begin{align*}
h(\arxxuxxn) &= h(\arxxn)\cdot h(\arxxn) \\
&=\left[3(\ar)+2(\arx)+(\arxxn) \right]\cdot\left[3(\ar)+2(\arx)+(\arxxn) \right] \\
& = 9(\ar)\cdot(\ar)+ 4(\arx)\cdot(\arx)+(\arxxn)\cdot(\arxxn) \\
& +12(\ar)\cdot(\arx)+6(\ar)\cdot(\arxxn)+4(\arx)\cdot(\arxxn) \\
& = 9(\ar)+12(\arx)+4(\arxx)+6(\arxxn)+4(\arxxnx)+(\arxxuxxn),
\end{align*}
as $h$ is an algebra automorphism. 
Using this and the facts $m_{\overline{\alpha},\overline{\beta}}=m_{\alpha,\beta}$
and $m_{\alpha,\ar}+m_{\overline{\alpha},\ar}=2^{\dim \alpha}$
we also obtain:
\begin{align*}
h(\arxxuxx) &= (16-9)(\ar)+12\overline{(\arx)}+4\overline{(\arxx)}+6\overline{(\arxxn)}+4\overline{(\arxxnx)}+\overline{(\arxxuxxn)} \\
& = 7(\ar)+12(\arx)+4(\arxxn)+6(\arxx)+4(\arxxnxn)+(\arxxuxx).
\end{align*}
Notice that both of these appear in a table in the appendix.

\subsection{The polynomials $h^{-1}(e_\alpha)$}

For each non-negative integer $d$,
let $\mathcal{F}_d$ denote the subspace of $\mathcal{F}$ spanned by those
$e_\alpha$ for which $\dim\alpha\leq d$.  The dimension of 
$\mathcal{F}_d$ is 
$$\dim\mathcal{F}_d=\sum_{k=0}^d A_k=1+1+2+4+\cdots+A_d,$$
where $A_k$ denotes the number of series-parallel diagrams with precisely $k$
segments.  As we observed in \cite{richter_genericorthotopes}, $A_k$ is the
$k$th term in sequence A000084 in the Online
Encyclopedia of Integer Sequences.  Several values of $A_d$ and
$\sum_{k=0}^d A_k$ appear in Figure \ref{oeis}.

\begin{figure}
$$\begin{array}{c|c|c}
 &  & \dim\mathcal{F}_d \\
d & A_d & =\sum_{k=0}^d A_k \\
\hline
0 & 1 & 1 \\
1 & 1 & 2 \\
2 & 2 & 4 \\
3 & 4 & 8 \\
4 & 10 & 18 \\
5 & 24 & 42 \\
6 & 66 & 108 \\
7 & 180 & 288 \\
8 & 522 & 810 \\
9 & 1532 & 2342 \\
10 & 4624 & 6966 \\
\end{array}$$
\caption{OEIS sequence A000084 and partial sums.}
\label{oeis}
\end{figure}

% copied from the OEIS data on sequence A000084:
% 1, 2, 4, 10, 24, 66, 180, 522, 1532, 4624, 14136,

From the results above, we know that $h^{-1}$ is an automorphism of the algebra
$\mathcal{F}$.  Morever, the restriction of $h-\mathrm{Id}$ to 
the subspace $\mathcal{F}_d$ is nilpotent, whence we have
$$h^{-1}=\sum_{k=0}^{d}(-1)^k(h-\mathrm{Id})^k.$$
We tabulate $h^{-1}(e_\alpha)$ for $\dim\alpha\leq 3$ in
Figure \ref{eulerpolys3} and those with $\dim\alpha=4$ in the appendix.
We shall exhibit more properties of $h^{-1}(e_\alpha)$ after we establish the main result.

\begin{figure}
$$\begin{array}{c|c}
\alpha & h^{-1}(e_\alpha) \\
\hline
\ar & (\ar) \\
\arx & -(\ar)+(\arx) \\
\arxx & (\ar)-2(\arx)+(\arxx) \\
\arxxn & -(\ar)-2(\arx)+(\arxxn) \\
\arxxx & -(\ar)+3(\arx)-3(\arxx)+(\arxxx) \\
\arxxnx & (\ar)+(\arx)-2(\arxx)-(\arxxn)+(\arxxnx) \\
\arxxnxn & (\ar)+(\arx)-(\arxx)-2(\arxxn)+(\arxxnxn) \\
\arxxxn & -(\ar)+3(\arx)-3(\arxxn)+(\arxxxn) \\
\end{array}$$
\caption{Polynomials $h^{-1}(e_\alpha)$ for $\dim\alpha\leq 3$.}
\label{eulerpolys3}
\end{figure}

\section{The Main Result}

We use the expressions $h^{-1}(e_\alpha)$ to state our main result.  
For notational convenience, define 
$D\in\mathrm{End}(\mathcal{F})$ by
$$D(e_\alpha):=2^{\dim{\alpha}}e_\alpha$$
for each $\alpha$.
With this, we assert:

\begin{Thm}
Suppose $P$ is an integral generic orthotope and $k\in\{0,1,2,...,d\}$.  Then
\begin{equation}
\sum_{\alpha}L_{\alpha,k}(P)e_\alpha=2^{k-d}\sum_{\deg\alpha=k}C_{\alpha,\deg\alpha}(P)D(h^{-1}(e_\alpha)).
\end{equation}
\label{mainresult}
\end{Thm}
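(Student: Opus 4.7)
The plan is to reduce the theorem to a single combinatorial identity about cube counts and then prove that identity by a weighted bijection. Using Proposition \ref{shiftrelation}, which gives $L_\alpha(tP)=\sum_j C_{\alpha,j}(P)(t-1)^j$, multiplying the theorem by $t^k$ and summing over $k\in\{0,\ldots,d\}$ shows the theorem is equivalent to the single identity in $\mathcal{F}[t]$
$$\sum_j (t-1)^j F_j \;=\; 2^{-d}\,D\,h^{-1}\!\Bigl(\sum_k (2t)^k G_k\Bigr),$$
where $F_j:=\sum_\alpha C_{\alpha,j}(P)e_\alpha$ and $G_k:=\sum_{\deg\alpha=k}C_{\alpha,k}(P)e_\alpha$. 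Since $h$ is an algebra automorphism, $D\,h^{-1}$ is invertible with inverse $h\,D^{-1}$; applying $2^d\,h\,D^{-1}$ to both sides, and using the elementary expansion $(2t)^{d-\dim\beta}=2^{d-\dim\beta}\sum_j\binom{d-\dim\beta}{j}(t-1)^j$, reduces the theorem to the following local--global identity: for every floral vertex $\beta$ and every $j\in\{0,1,\ldots,d-\dim\beta\}$,
$$(\star)\qquad\sum_\gamma m_{\gamma,\beta}\,2^{d-\dim\gamma}\,C_{\gamma,j}(P)\;=\;\binom{d-\dim\beta}{j}\,2^{d-\dim\beta}\,C_{\beta,d-\dim\beta}(P).$$

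I would prove $(\star)$ by a weighted bijection between two families of marked cubes. The left side counts triples $(C',s,\sigma)$ in which $C'\subset P$ is a $j$-dimensional open lattice cube of floral vertex type $\gamma$, $s$ is a generalized orthant of $\gamma$ of floral type $\beta$ (a sign vector in $\{-,0,+\}^{\dim\gamma}$ with exactly $\dim\beta$ zero coordinates, contributing the factor $m_{\gamma,\beta}$), and $\sigma\in\{\pm\}^{d-\dim\gamma}$ picks a sign in each of the $r:=d-\dim\gamma-j$ free-perpendicular directions of $C'$ together with its $j$ along-cube directions. The right side counts triples $(T,f,\tau)$ where $T$ is a top-dimensional $\beta$-cube in $P$, $f\subset\overline{T}$ is one of its $\binom{d-\dim\beta}{j}2^{d-\dim\beta-j}$ faces of dimension $j$, and $\tau\in\{\pm\}^j$ is a sign in the along-$f$ directions. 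The correspondence sends $(C',s,\sigma)\mapsto(T,f=C',\tau=\sigma|_{\mathrm{along}})$, where $T$ is obtained by translating $C'$ one lattice step in each of its $r$ free-perpendicular directions (with signs from $\sigma$) and in each of the $\dim\gamma-\dim\beta$ nonzero coordinates of $s$.

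The main obstacle is verifying that this map is well defined in both directions. The key local-geometric fact is that for any top-dim $\beta$-cube $T$ and any $j$-dimensional face $f\subset\overline{T}$ with global floral vertex $\gamma$, the $d-j$ perpendicular directions at $f$ split canonically into three groups: $\dim\beta$ directions restricted by $\beta$ (perpendicular to $T$ as well); $\dim\gamma-\dim\beta$ directions in which $T$ extends away from $f$ against a neighboring cube not in $P$ (these recover the nonzero entries of $s$); and $r=d-\dim\gamma-j$ directions in which $T$ extends against an in-$P$ neighbor (these recover the perpendicular entries of $\sigma$). The fact that every free-perpendicular direction of $f$ must appear as a cube-along direction of $T$ -- rather than as a perpendicular-restricted direction of $T$ -- follows because $P$ is a generic orthotope, so every tangent cone is a floral vertex and no direction can be simultaneously free at $f$ and restricted by $\beta$ at $T$. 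Granted this verification, $(\star)$ follows by counting, and retracing the reductions above assembles the master identity and hence, by extracting the coefficient of $t^k$, the theorem itself.
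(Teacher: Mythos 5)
Your proposal is correct, and it overlaps with the paper at exactly one point: your identity $(\star)$ is, after dividing by $2^d$ and writing $\deg\beta=d-\dim\beta$, precisely Lemma \ref{countinglemma}, and your weighted bijection is a fleshed-out version of the paper's terse double-counting proof of that lemma (your extra sign $\tau$ along the face contributes the same factor $2^j$ to both sides, so it only reorganizes the paper's ``normalizing factors''; the direction-splitting fact you flag --- that no direction perpendicular to $T$ can be a lineality direction at $f$ --- is the same implicit local fact the paper relies on, and your verification of it is sound). Where you genuinely diverge is in how the theorem is extracted from that lemma. The paper first specializes to an alternating sum (Proposition \ref{localcount}, which is in effect your master identity evaluated at $t=0$), obtains the $k=0$ case as Corollary \ref{eulervector}, and then reaches general $k$ geometrically, by slicing $P$ with half-integer hyperplanes $\Pi_{I,\lambda}$ and applying the $k=0$ case to the $(d-k)$-dimensional cross-sections. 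You instead bundle all $k$ into one identity in $\mathcal{F}[t]$ via the relation (\ref{shiftrelation}), apply the invertible operator $2^d h D^{-1}$, and match coefficients in the bases $\{e_\beta\}$ and $\{(t-1)^j\}$; this handles every $k$ uniformly and eliminates the cross-section step entirely, at the cost of not producing the Euler-vector statement as a free intermediate and of leaning on the slicing idea nowhere (the paper reuses it later for reciprocity). Two small points to tidy: when you match coefficients of $(t-1)^j$ you need $(\star)$ for all $j$, but for $j>\deg\beta$ both sides vanish automatically (since $m_{\gamma,\beta}\neq 0$ forces $\dim\beta\le\dim\gamma$, hence $C_{\gamma,j}=0$ there), so nothing is lost; and ``translating $C'$ one lattice step'' should be phrased as taking the unique $(\deg\beta)$-dimensional open cube spanned off $C'$ in the chosen signed directions, since $T$ is not a translate of the $j$-dimensional cube $C'$.
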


Before proving it, 
we demonstrate this formula using our running example.  
Using the formula from Theorem \ref{mainresult} 
with the values tabulated in Figure \ref{localcounts} and $k=0$, we have
\begin{align*}
& 2^{k-d}\sum_{\deg\alpha=k}C_{\alpha,\deg\alpha}(P)D(h^{-1}(e_\alpha)) \\
&= 2^{0-2}\sum_{\deg\alpha=0}C_{\alpha,0}(P)D(h^{-1}(e_\alpha)) \\
&= 2^{-2}\left[C_{\arxx,0}(P) D(h^{-1}(\arxx))+C_{\arxxn,0}(P)D(h^{-1}(\arxxn))\right] \\
&= \frac{1}{4}\left[11D(h^{-1}(\arxx))+7D(h^{-1}(\arxxn))\right] \\
&= \frac{11}{4}D\left((\ar)-2(\arx)+(\arxx)\right)
+\frac{7}{4}D\left(-(\ar)-2(\arx)+(\arxxn)\right) \\
&= \frac{11}{4}\left((\ar)-4(\arx)+4(\arxx)\right)
+\frac{7}{4}\left(-(\ar)-4(\arx)+4(\arxxn)\right) \\
&= 1(\ar)-18(\arx)+11(\arxx)+7(\arxxn) \\
&= \sum_{\alpha}L_{\alpha,0}(P)e_\alpha,
\end{align*}
yielding the first column of the table in Figure \ref{lpenumerator}.
Likewise, with $k=1$,
\begin{align*}
2^{-1}C_{\arx,1}(P)D(h^{-1}(\arx)) = & \frac{1}{2}\cdot 34\cdot D(-(\ar)+(\arx)) \\
=& \frac{1}{2}\cdot 34\left[-(\ar)+2(\arx)\right] \\
=& -17(\ar)+34(\arx) \\
=& L_{\ar,1}(P)e_{\ar}+L_{\arx,1}(P)e_{\arx}. 
\end{align*}

We devote the remainder of this section to proving Theorem \ref{mainresult}.
We accomplish this by first computing $L_{\alpha,0}(P)$ for an integral generic orthotope
and then considering cross-sections to obtain the general formula.

Define the {\it Euler vector} of $P$ by
$$\phi(P):=\sum_{\alpha}L_{\alpha,0}(P)e_\alpha\in\mathcal{F},$$
summing over all floral arrangement congruence classes $\alpha$ with $\deg\alpha\leq d$.
The Euler vector for our running example is thus
$$\phi(P)=e_{\ar}-18e_{\arx}+11e_{\arxx}+7e_{\arxxn}.$$

\begin{Lem}
Suppose $P$ is an integral generic orthotope, $\beta$ is a 
floral vertex congruence class, and $k\in\{0,1,2,...,d\}$.  Then
\begin{equation}
\sum_{\alpha}2^{-\dim\alpha}m_{\alpha,\beta}C_{\alpha,k}(P)
=2^{-\dim\beta} \binom{\deg\beta}{k}C_{\beta,\deg\beta}(P),
\end{equation}
summing over all floral vertex congruence classes $\alpha$ with $\dim\alpha\leq d$.
\label{countinglemma}
\end{Lem}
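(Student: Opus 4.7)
The plan is to prove the identity by double-counting the set $\mathcal{P}$ of pairs $(C, C')$ in which $C \subset P$ is a relatively open $k$-dimensional integral unit cube, $C' \subset P$ is a relatively open $\deg\beta$-dimensional integral unit cube of floral type $\beta$, and $C$ is a $k$-face of $\overline{C'}$. Organizing the count first by $C'$ and then by $C$, and comparing, will yield the claimed identity after a uniform scaling.

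Counting $|\mathcal{P}|$ by fixing $C'$ is immediate: a $\deg\beta$-dimensional integral unit cube has exactly $\binom{\deg\beta}{k}2^{\deg\beta-k}$ relatively open $k$-faces, each itself an integral unit cube contained in $P$, so $|\mathcal{P}| = \binom{\deg\beta}{k}2^{\deg\beta-k}C_{\beta,\deg\beta}(P)$. The interesting count is by $C$, grouped by floral type. Fix a floral vertex congruence class $\alpha$ and a $k$-dimensional open cube $C$ of type $\alpha$; after applying an element of $BC_d$ we may take the tangent cone at a point $x$ of $C$ to be $\alpha\times\mathbb{R}^{d-\dim\alpha}$, with $\alpha$ in the first $\dim\alpha$ coordinates and the $k$ free directions of $C$ lying among the last $d-\dim\alpha$. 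The central claim is that the number of $C'\supset C$ in $P$ of floral type $\beta$ and dimension $\deg\beta$ is $m_{\alpha,\beta}\cdot 2^{d-\dim\alpha-k}$.

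Verifying this local count is the main obstacle. Each candidate $C'$ is specified by its integer-coordinate set $I_{C'}$ (of size $\dim\beta$, contained in the integer coordinates of $C$) together with a side $\sigma_j\in\{+,-\}$ for every integer coordinate of $C$ that becomes free in $C'$. Because the tangent cone at an interior point of $C'$ must be $\beta\times\mathbb{R}^{d-\dim\beta}$ with $\beta$ having exactly $\dim\beta$ nontrivially pinched directions, the set $I_{C'}$ is forced to lie entirely within the first $\dim\alpha$ coordinates; otherwise a trivially pinched coordinate contributes a spurious free factor and the associated floral vertex is strictly smaller than $\beta$. Given this constraint, the $\dim\alpha-\dim\beta$ extras among the first $\dim\alpha$ coordinates together with their sides form a $(\dim\alpha-\dim\beta)$-dimensional generalized orthant of $\alpha$, and $C'$ having floral type $\beta$ amounts to requiring this orthant to have floral type $\beta$ in $\alpha$, which by definition admits exactly $m_{\alpha,\beta}$ choices. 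The remaining $d-\dim\alpha-k$ extras lie in the last $d-\dim\alpha$ coordinates where $\alpha$ is free, so their sides may be chosen arbitrarily, contributing $2^{d-\dim\alpha-k}$.

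Equating the two counts, summing over floral vertex congruence classes $\alpha$ with $\dim\alpha\leq d$, yields
\[\sum_{\alpha}C_{\alpha,k}(P)\,m_{\alpha,\beta}\,2^{d-\dim\alpha-k} = \binom{\deg\beta}{k}\,2^{\deg\beta-k}\,C_{\beta,\deg\beta}(P);\]
dividing both sides by $2^{d-k}$ and using $\deg\beta=d-\dim\beta$ converts this into the identity asserted by the lemma.
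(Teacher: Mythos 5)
Your proof is correct and is essentially the paper's own argument: the paper likewise double-counts incidences between relatively open $k$-cubes of type $\alpha$ and $(\deg\beta)$-cubes of type $\beta$, with the factors $2^{-\dim\alpha}$, $2^{-\dim\beta}$, $\binom{\deg\beta}{k}$ appearing as the same incidence multiplicities you compute explicitly. Your write-up just makes the ``normalizing factors'' precise (each $(\deg\beta)$-cube has $\binom{\deg\beta}{k}2^{\deg\beta-k}$ $k$-faces, each $k$-cube of type $\alpha$ meets $m_{\alpha,\beta}2^{\,d-\dim\alpha-k}$ such cubes) before dividing by $2^{\,d-k}$, which is a welcome sharpening but not a different route.
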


\begin{proof}
This is a double-counting formula.
Notice that $C_{\beta,\deg\beta}(P)$ is the number of $(\deg\beta)$-dimensional relatively
open integral unit cubes of type $\beta$.  For each $k$-dimensional relatively
open unit cube $C$ of type $\alpha$, the number of $(\deg\beta)$-dimensional relatively open integral unit cubes
of type $\beta$ incident to $C$ coincides with the number
$m_{\alpha,\beta}$ of orthants of floral type $\beta$ contained in the floral vertex $\alpha$
(abusing notation slightly).
The auxiliary values $2^{-\dim\alpha}$, $2^{-\dim\beta}$, and $\binom{\deg\beta}{k}$
are normalizing factors.  In particular, notice that $\binom{\deg\beta}{k}$
counts the number of orthographic subspaces of dimension $k$ in $\mathbb{R}^{\deg\beta}$.
\end{proof}

We demonstrate the formula above with our running example, using
$\beta=\arx$ and $k=0$.  We have
\begin{align*}
&\sum_{\alpha}2^{-\dim\alpha}m_{\alpha,\beta}C_{\alpha,k}(P) \\
&= \sum_{\alpha}2^{-\dim\alpha}m_{\alpha,\arx}C_{\alpha,0}(P) \\
&= 2^{-\dim(\ar)}m_{\ar,\arx}C_{\ar,0}(P)+2^{-\dim(\arx)}m_{\arx,\arx}C_{\arx,0}(P) \\
& +2^{-\dim(\arxx)}m_{\arxx,\arx}C_{\arxx,0}(P)+2^{-\dim(\arxxn)}m_{\arxxn,\arx}C_{\arxxn,0}(P) \\
&= 2^{-0}\cdot 0\cdot 3+2^{-1}\cdot 1\cdot 16+2^{-2}\cdot 2\cdot 11+2^{-2}\cdot 2\cdot 7 
= 17. 
\end{align*}
On the other hand,
\begin{align*}
&\binom{\deg\beta}{k}2^{-\dim\beta} C_{\beta,\deg\beta}(P) \\
&=\binom{\deg(\arx)}{0}2^{-\dim(\arx)} C_{\arx,\deg(\arx)}(P) \\
&=\binom{1}{0}2^{-1} C_{\arx,1}(P) 
=1\cdot\frac{1}{2}\cdot 34 =17.
\end{align*}

The formula in Lemma \ref{countinglemma} is a generalization of a formula from \cite{richter_genericorthotopes}
for the volume expressed
with the orthant-counting function $\mu_d$.  Thus, with $\beta=\ar$ and $k=0$, we have 
$$\sum_{\alpha}2^{-\dim\alpha}m_{\alpha,\beta}C_{\alpha,k}(P)=\sum_{\alpha}2^{-\dim\alpha}m_{\alpha,\ar}C_{\alpha,0}(P)
=2^{-d}\sum_\alpha\mu_d(\alpha)n_\alpha(P),$$
where $2^{-\dim\alpha}m_{\alpha,\ar}=2^{-d}\mu_d(\alpha)$ is the fraction of all $d$-dimensional
orthants occupied by $\alpha\in\mathbb{R}^d$ and $C_{\alpha,0}=n_\alpha(P)$ is the number of lattice
points in $P$ having floral type $\alpha$.  On the other hand, we have
$$2^{-\dim\beta} \binom{\deg\beta}{k}C_{\beta,\deg\beta}(P)=2^{-0} \binom{d}{0}C_{\ar,d}(P)=\mathrm{Vol}_d(P).$$

\begin{Prp}
If $P$ is an integral generic orthotope, then
\begin{equation}
\sum_{k=0}^d(-1)^k\sum_{\alpha}2^{-\dim\alpha}C_{\alpha,k}(P)e_\alpha
=2^{-d}\sum_{\deg\beta=0}C_{\beta,0}(P)h^{-1}(e_\beta).
\end{equation}
\label{localcount}
\end{Prp}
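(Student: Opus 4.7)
The plan is to apply $h$ to both sides and check the equality in the algebra $\mathcal{F}$, then invoke the fact that $h$ is an automorphism (so knocking out $h$ by $h^{-1}$ gives the result). Since $h^{-1}(e_\beta)$ on the right-hand side looks awkward, it is much cleaner to prove the equivalent identity
\begin{equation*}
\sum_{k=0}^d(-1)^k\sum_{\alpha}2^{-\dim\alpha}C_{\alpha,k}(P)\,h(e_\alpha)
=2^{-d}\sum_{\deg\beta=0}C_{\beta,0}(P)\,e_\beta,
\end{equation*}
after which the stated proposition follows by applying $h^{-1}$ to both sides.

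Using the identification of floral arrangement congruence classes with floral vertex congruence classes of dimension at most $d$ (via $\alpha\leftrightarrow\beta\times\mathbb{R}^{d-\dim\beta}$), I would expand $h(e_\alpha)=\sum_\gamma m_{\alpha,\gamma}e_\gamma$ and interchange the summations over $\alpha$ and $\gamma$, which is legitimate because for a bounded $P$ only finitely many terms are nonzero. This gives
\begin{equation*}
\sum_\gamma\left[\sum_{k=0}^d(-1)^k\sum_{\alpha}2^{-\dim\alpha}m_{\alpha,\gamma}C_{\alpha,k}(P)\right]e_\gamma.
\end{equation*}
The inner sum over $\alpha$ is exactly the left-hand side of Lemma \ref{countinglemma}, which collapses it to $2^{-\dim\gamma}\binom{\deg\gamma}{k}C_{\gamma,\deg\gamma}(P)$.

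Pulling the $\gamma$-dependent factors outside the $k$-sum leaves the alternating binomial sum $\sum_{k=0}^d(-1)^k\binom{\deg\gamma}{k}$. Because $\binom{\deg\gamma}{k}$ vanishes for $k>\deg\gamma$, this equals $(1-1)^{\deg\gamma}$, which is $1$ when $\deg\gamma=0$ and $0$ otherwise. Combined with the observation that $\deg\gamma=0$ forces $\dim\gamma=d$ (so $2^{-\dim\gamma}=2^{-d}$), exactly the terms surviving on the right-hand side remain, and the identity at the top of this proof follows.

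The main obstacle is bookkeeping rather than any deep step: one must make sure that the implicit identification of floral arrangements with padded floral vertices is consistent with the way $\dim$, $\deg$, $m_{\alpha,\gamma}$, and $C_{\alpha,k}$ appear in Lemma \ref{countinglemma}, and that all summations are finite so the interchange is valid. After that, the proof is essentially one application of the lemma followed by the vanishing of $(1-1)^n$ for $n>0$.
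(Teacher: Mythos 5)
Your proposal is correct and follows essentially the same route as the paper: expand $h(e_\alpha)$ via the coefficients $m_{\alpha,\beta}$, interchange sums, collapse the inner sum with the double-counting formula of Lemma \ref{countinglemma}, kill all terms with $\deg\beta>0$ by the alternating binomial identity, and finish by applying $h^{-1}$. The only cosmetic difference is that you state the $h$-transformed identity up front and then invert, while the paper runs the same computation directly and applies $h^{-1}$ at the end.
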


We remark that the stipulation $\deg\beta=0$ in this proposition is equivalent to requiring
that $\beta$ be a floral vertex.  In other words, the domain of summation in the right-hand
side of this formula coincides with $d$-dimensional floral vertex congruence classes.

\begin{proof}
This is a straightforward computation using the results above.  In detail, notice
\begin{align*}
&\sum_{k=0}^d(-1)^k\sum_{\alpha}2^{-\dim\alpha}C_{\alpha,k}(P)h(e_\alpha) \\
&=\sum_{k=0}^d (-1)^k\sum_{\alpha}2^{-\dim\alpha}C_{\alpha,k}(P)\sum_{\beta} m_{\alpha,\beta}e_\beta 
\hbox{ (definition of $h$)} \\
&=\sum_{k=0}^d (-1)^k\sum_{\beta}\sum_{\alpha} 2^{-\dim\alpha}m_{\alpha,\beta}C_{\alpha,k}(P)e_\beta  \\
&=\sum_{k=0}^d (-1)^k\sum_{\beta}\binom{\deg\beta}{k}2^{-\dim\beta}C_{\beta,\deg\beta}(P)e_\beta 
\hbox{ (from Proposition \ref{localcount})} \\
&=\sum_{\beta}2^{-\dim\beta}C_{\beta,\deg\beta}\left[\sum_{k=0}^{\deg\beta} (-1)^k \binom{\deg\beta}{k}\right]e_\beta. 
\end{align*}
The expression in square brackets vanishes when $\deg\beta>0$ and equals 1 when $\deg\beta=0$, so we obtain
\begin{align*}
&\sum_{k=0}^d(-1)^k\sum_{\alpha}2^{-\dim\alpha}C_{\alpha,k}(P)h(e_\alpha) \\
&=\sum_{\deg\beta=0}2^{-\dim\beta}C_{\beta,\deg\beta}(P)e_\beta \\
&=2^{-d}\sum_{\deg\beta=0}C_{\beta,0}(P)e_\beta
\end{align*}
Applying $h^{-1}$, the result follows.
\end{proof}

\begin{Cor}
Suppose $P$ is an integral generic orthotope.  Then
$$\phi(P)=2^{-d}\sum_{\deg\alpha=0}C_{\alpha,0}(P)D(h^{-1}(e_{\alpha})).$$
\label{eulervector}
\end{Cor}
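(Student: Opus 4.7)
The plan is to deduce the corollary directly from Proposition \ref{localcount} by applying the endomorphism $D$ to both sides, once the left-hand side of that proposition has been rewritten in terms of the coefficients $L_{\alpha,0}(P)$ rather than in terms of the cube counts $C_{\alpha,k}(P)$.

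First I would interchange the order of summation on the left-hand side of Proposition \ref{localcount} to obtain
$$\sum_\alpha 2^{-\dim\alpha}\left[\sum_{k=0}^{d}(-1)^k C_{\alpha,k}(P)\right]e_\alpha.$$
Because a relatively open $k$-dimensional integral unit cube is contained in a tangent-cone class of degree at least $k$, the coefficient $C_{\alpha,k}(P)$ vanishes for $k>\deg\alpha$, so the bracketed expression equals $\sum_{k=0}^{\deg\alpha}(-1)^k C_{\alpha,k}(P)$, which by Corollary \ref{clrelation} at $k=0$ is precisely $L_{\alpha,0}(P)$. Hence Proposition \ref{localcount} can be restated as
$$\sum_\alpha 2^{-\dim\alpha}L_{\alpha,0}(P)\,e_\alpha \;=\; 2^{-d}\sum_{\deg\beta=0}C_{\beta,0}(P)\,h^{-1}(e_\beta).$$

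Second I would apply $D$ to both sides. Since $D(e_\alpha)=2^{\dim\alpha}e_\alpha$ and $D$ is linear, the weights $2^{-\dim\alpha}$ on the left are cancelled, leaving $\sum_\alpha L_{\alpha,0}(P)\,e_\alpha = \phi(P)$. On the right the constant $2^{-d}$ and the cube counts $C_{\beta,0}(P)$ pass through $D$, yielding $2^{-d}\sum_{\deg\beta=0}C_{\beta,0}(P)\,D(h^{-1}(e_\beta))$, and a relabeling of the dummy index completes the identity.

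There is no genuine obstacle here; the real work was already packaged into Proposition \ref{localcount} and Corollary \ref{clrelation}. The only point requiring care is the dimension bookkeeping: for a floral arrangement $\alpha$ appearing as a summand of $\phi(P)$, the basis vector $e_\alpha\in\mathcal{F}$ is indexed by the underlying floral vertex, whose dimension is $\dim\alpha = d-\deg\alpha$, so multiplying by $2^{\dim\alpha}$ via $D$ is exactly what is needed to convert $2^{-\dim\alpha}L_{\alpha,0}(P)$ into $L_{\alpha,0}(P)$ and, in particular, to turn the factor $2^{-d}$ on the right into a factor that restores the full $\phi(P)$ on the left.
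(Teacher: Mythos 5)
Your proof is correct and uses essentially the same approach as the paper: the paper's own argument starts from $\phi(P)$, expands $L_{\alpha,0}(P)$ via Corollary \ref{clrelation}, rewrites $e_\alpha = 2^{-\dim\alpha}D(e_\alpha)$, and then invokes Proposition \ref{localcount}, which is exactly your computation run in the reverse direction. The only cosmetic difference is that you apply $D$ to both sides of Proposition \ref{localcount} at the end rather than inserting it midstream, which changes nothing of substance.
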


\begin{proof}
Notice
\begin{align*}
\phi(P) &= \sum_{\alpha}L_{\alpha,0}(P)e_\alpha 
\hbox{ (definition of $\phi$)} \\
&= \sum_{\alpha}\sum_{k=0}^{d}(-1)^k C_{\alpha,k}(P)e_\alpha 
\hbox{ (from Corollary \ref{clrelation})} \\
&= \sum_{k=0}^{d}(-1)^k\sum_{\alpha} C_{\alpha,k}(P)e_\alpha \\
&= \sum_{k=0}^{d}(-1)^k\sum_{\alpha} 2^{-\dim\alpha}C_{\alpha,k}(P)D(e_\alpha)
\hbox{ (definition of $D$)} \\
&=2^{-d}\sum_{\deg\alpha=0}C_{\alpha,0}(P)D(h^{-1}(e_\alpha)) 
\hbox{ (from Proposition \ref{localcount})} \\
\end{align*}
\end{proof}

With this, we prove Theorem \ref{mainresult} by considering cross-sections.  Suppose $k$ is a positive
integer, $I\subset[d]$ with $|I|=k$, and
$\lambda:I\rightarrow(\mathbb{Z}+\frac{1}{2})$ is a tuple of half integers.
Recall from \cite{richter_genericorthotopes} that the cross-section 
$P\cap\Pi_{I,\lambda}$ is a generic orthotope of dimension
$d-k$, where $\Pi_{I,\lambda}$ is the
generalized hyperplane determined by $(I,\lambda)$.
Thus, for an appropriate choice of $v\in\mathbb{R}^d$ (depending on $\lambda$), the
translation $v+\left(P\cap\Pi_{I,\lambda}\right)$ is integral and congruent to $P\cap\Pi_{I,\lambda}$.
Counting integral unit cubes of dimension $k$ in $P$ thus amounts to counting 
lattice points in $P\cap\Pi_{I,\lambda}$.
This then yields, for any floral arrangement $\alpha$,
$$C_{\alpha,k}(P)=\sum_{(I,\lambda)} C_{\alpha,0}(v_\lambda+P\cap\Pi_{I,\lambda}),$$
summing over all choices of $(I,\lambda)$ with $|I|=k$.  
Theorem \ref{mainresult} then follows immediately.
\hspace{\fill}$\square$

\section{Auxiliary results}

This section has several results which supplement the preceding theory.

\subsection{Euler vector}

The following helps to explain why we refer to $\phi(P)$ as the Euler vector:

\begin{Prp}
If $P$ is an integral generic orthotope of dimension $d$ and Euler characteristic
$\chi(P)$, then
$$\chi(P)=(-1)^d L_{\ar,0}(P).$$
\end{Prp}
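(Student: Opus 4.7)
The plan is to combine the cellular Euler-characteristic formula with a parity-of-dimension argument on the boundary. Since the relatively open integral unit cubes in $P$ form a CW-decomposition of $P$, those whose floral type differs from $\ar$ provide a CW-decomposition of $\partial P$ (recall that a cube has floral type $\ar$ precisely when its tangent cone is all of $\mathbb{R}^d$, which happens exactly when the cube lies in the interior of $P$). Thus
$$\chi(P) = \sum_{\alpha,\, k}(-1)^k C_{\alpha,k}(P) \quad\text{and}\quad \chi(\partial P) = \sum_{\alpha \neq \ar,\, k}(-1)^k C_{\alpha,k}(P),$$
and subtracting and applying Corollary~\ref{clrelation} at $\alpha = \ar$, $k = 0$ yields
$$\chi(P) - \chi(\partial P) = \sum_k (-1)^k C_{\ar,k}(P) = L_{\ar,0}(P).$$

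Next, since every floral vertex of $P$ is locally homeomorphic to a closed half-space at the origin (a structural consequence of the read-once Boolean description), $P$ is a compact topological $d$-manifold with boundary and $\partial P$ is a closed $(d-1)$-manifold. For $d$ even, $\partial P$ is closed and odd-dimensional, so $\chi(\partial P) = 0$ by Poincar\'e duality. For $d$ odd, form the double $DP = P \cup_{\partial P} P$: this is a closed $d$-manifold of odd dimension with $\chi(DP) = 2\chi(P) - \chi(\partial P) = 0$, hence $\chi(\partial P) = 2\chi(P)$. Both cases are summarized as $\chi(\partial P) = (1-(-1)^d)\chi(P)$.

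Substituting back gives $L_{\ar,0}(P) = \chi(P) - (1-(-1)^d)\chi(P) = (-1)^d\chi(P)$, which upon multiplication by $(-1)^d$ is the desired identity $\chi(P) = (-1)^d L_{\ar,0}(P)$.

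The main obstacle I foresee is rigorously verifying that an arbitrary integral generic orthotope $P$ really is a topological manifold with boundary; while plausible from the structure of floral vertices described in \cite{richter_genericorthotopes}, this demands that every tangent cone defined by a read-once Boolean function be homeomorphic to a closed half-space near the origin. If that verification is not immediately available, an alternative would be to invoke a generalization of Ehrhart--Macdonald reciprocity to non-convex integral polytopes, using that $L_{\ar}(tP)$ is the interior lattice-point enumerator and that the constant term of the full Ehrhart polynomial $L(tP) = \sum_\alpha L_\alpha(tP)$ equals $\chi(P)$.
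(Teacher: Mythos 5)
Your reduction of the statement to a boundary computation is fine as far as it goes: the relatively open integral unit cubes do give a CW structure on $P$, the cells of floral type other than $\ar$ form a subcomplex whose union is $\partial P$, and Corollary \ref{clrelation} with $\alpha=\ar$, $k=0$ gives $L_{\ar,0}(P)=\sum_k(-1)^kC_{\ar,k}(P)=\chi(P)-\chi(\partial P)$. The genuine gap is the step you yourself flag and then lean on anyway: everything afterwards requires $P$ to be a compact topological (or at least homology) $d$-manifold with boundary, so that $\chi(\partial P)=0$ for $d$ even and $\chi(\partial P)=2\chi(P)$ for $d$ odd via the double. That structural fact is neither proved nor cited in this paper, and it is not a formality: it amounts to showing that the cone of every floral vertex --- every read-once combination such as $(1\cap 2)\cup(3\cap 4)$ --- has link in the unit sphere a ball, and the natural induction over the series-parallel structure (links of conjunctions are joins, links of disjunctions are closed complements of such joins) needs Schoenflies-type input in higher dimensions. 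Even the weaker Lefschetz-duality identity $\chi(\partial P)=\bigl(1-(-1)^d\bigr)\chi(P)$, which is all you actually use, presupposes this. Your proposed fallback does not close the gap either: within this paper Ehrhart--Macdonald reciprocity for generic orthotopes is \emph{derived from} the present proposition, so invoking reciprocity here is circular, and the general fact that the constant term of the Ehrhart polynomial of a polytopal complex equals its Euler characteristic only reproduces $L_{\ar,0}(P)=\chi(P)-\chi(\partial P)$, leaving the same duality step open; Macdonald-style reciprocity for non-convex complexes again assumes a (homology) manifold hypothesis.

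By contrast, the paper's own proof avoids all topology of links. It sets $f_d(P)=\sum_{k}(-1)^kC_{\ar,k}(P)$, observes the splitting identity $f_d(P^+\cup P^-)=f_d(P^+)+f_d(P^-)+f_{d-1}(P^+\cap P^-)$ when $P$ is cut along an integral hyperplane, and notes $f_d(I^d)=(-1)^d$; an induction on dimension (and on the pieces of such splittings) then identifies $(-1)^df_d$ with the Euler characteristic valuation. If you wish to keep your route, you must first prove, or explicitly quote from \cite{richter_genericorthotopes}, that every generic orthotope is a manifold with boundary; otherwise the argument after the identity $L_{\ar,0}(P)=\chi(P)-\chi(\partial P)$ does not stand.
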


\begin{proof}
For a $d$-dimensional integral generic orthotope, define
$$f_d(P):=\sum_{k=0}^d(-1)^k C_{\ar,k}(P).$$
We will see that $(-1)^d f_d$ agrees with the Euler characteristic
valuation.  
The proof follows by a routine induction argument on $d$.  

Suppose first that $d=1$.
Then $P$ is a disjoint union of closed intervals and $-f_d(P)$ is the number of such intervals of $P$, i.e.\
the Euler characteristic of $P$.

Next, suppose $P$ is an integral generic orthotope of dimension $d$ and $\Pi$ is a 
$(d-1)$-dimensional integral hyperplane such that 
the half-spaces
on either side of $\Pi$ intersect $P$ in $d$-dimensional orthogonal polytopes, say $P^+$ and $P^-$.
Thus, if $H^\pm$ are the open half-spaces on either side of $\Pi$, then $P^+$ (resp.\ $P^-$) 
is the closure of the intersection $P\cap H^+$ (resp.\ $P\cap H^-$).
We observe that
$$C_{\ar,k}\left(P^+\cup P^-\right)=C_{\ar,k}\left(P^+\right)+C_{\ar,k}\left(P^-\right)+C_{\ar,k}\left(P^+\cap P^-\right)$$
for all $k$, although we must interpret this equation properly.  
Thus, while $C_{\ar,k}(P)$ generally denotes the number of relatively open $k$-dimensional integral unit cubes
of floral type $\ar$ in $P$, we notice that each such cube necessarily lies in the relative interior of $P$.
In particular, we notice that the orthogonal
polytopes $P=P^+\cup P^-$, $P^+$, and $P^-$ are all $d$-dimensional generic orthotopes,
whereas $P^+\cap P^-$ is a generic orthotope of dimension $d-1$.  Thus, for example, if 
$x$ lies in the relative interior of 
$P^+\cap P^-$, then $x\in \partial P^+$ and $x\in\partial P^-$ and $x$ cannot have floral
type $\ar$ in $P^+$ or in $P^-$.
From this observation, we see that
$$f_d(P^+\cup P^-)=f_d(P^+)+f_d(P^-)+f_{d-1}(P^+\cap P^-).$$
Thus,
\begin{align*}
&(-1)^df_d(P^+\cup P^-)\ \\
&=(-1)^df_d(P^+)+(-1)^df_d(P^-)-(-1)^{d-1}f_{d-1}(P^+\cap P^-),
\end{align*}
Notice that $C_{\ar,k}(I^d)$ is either 0 or 1 depending respectively on whether
$k<d$ or $k=d$, where $I^d$ is the standard unit cube.
In particular, we have $f_d(I^d)=(-1)^d$ for all $d$.  
Therefore $(-1)^df_d=\chi$ for all $d$.
\end{proof}

This in turn yields:

\begin{Cor}
If $P$ is a $d$-dimensional integral generic orthotope and $\alpha$ is a floral arrangement
congruence class, then $(-1)^{\deg\alpha}L_{\alpha,0}(P)$ is the
sum of the Euler characteristics of the faces of $P$ whose tangent cones
lie in $\alpha$.
\end{Cor}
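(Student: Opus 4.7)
The plan is to reduce this corollary to the previous proposition applied face-by-face. The starting point is Corollary \ref{clrelation} specialized to $k=0$, which gives
\[
L_{\alpha,0}(P)=\sum_{j=0}^{\deg\alpha}(-1)^{j}C_{\alpha,j}(P).
\]
This is already an alternating sum of cube counts, and it exactly mirrors the quantity $f_d(P)=\sum_{k=0}^{d}(-1)^{k}C_{\ar,k}(P)$ used in the proof of the previous proposition to compute $\chi(P)$; the task is to show that restricting the cube count to floral type $\alpha$ amounts to summing $f_{\deg\alpha}$ over the faces of $P$ of that type.

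Next I would set up the face decomposition. Write $\alpha=\beta\times\mathbb{R}^{\deg\alpha}$ for a floral vertex $\beta$ of dimension $d-\deg\alpha$. The set of $x\in P$ whose tangent cone lies in $\alpha$ is partitioned into its connected components, each of which is a face $F$ of $P$ of dimension $\deg\alpha$, contained in an integer-translate of a coordinate subspace of dimension $\deg\alpha$, and hence (in its own affine hull) an integral generic orthotope of dimension $\deg\alpha$. The central observation is that the relatively open $j$-dimensional integral unit cubes $C\subset P$ of floral type $\alpha$ are exactly the $j$-dimensional integral unit cubes $C$ lying in the relative interior of some such $F$: all points of $C$ share one tangent cone, so $C$ lies in a single stratum, and the tangent cone of $F$ (as a $\deg\alpha$-dimensional orthotope) at an interior point is $\mathbb{R}^{\deg\alpha}$, i.e.\ floral type $\ar$ within $F$. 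Consequently
\[
C_{\alpha,j}(P)=\sum_{F}C_{\ar,j}(F),
\]
where $F$ ranges over the faces of $P$ whose tangent cones lie in $\alpha$.

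Substituting this into the expression for $L_{\alpha,0}(P)$ and swapping the two finite sums yields
\[
L_{\alpha,0}(P)=\sum_{F}\sum_{j=0}^{\deg\alpha}(-1)^{j}C_{\ar,j}(F)=\sum_{F}f_{\deg\alpha}(F).
\]
The previous proposition, applied to each $F$ in its ambient dimension $\deg\alpha$, gives $f_{\deg\alpha}(F)=(-1)^{\deg\alpha}\chi(F)$. Multiplying by $(-1)^{\deg\alpha}$ then produces the desired identity $(-1)^{\deg\alpha}L_{\alpha,0}(P)=\sum_{F}\chi(F)$.

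The main obstacle is the bookkeeping at the decomposition step: one must be sure that (i) a cube of type $\alpha$ in $P$ lies in exactly one face $F$ of type $\alpha$ and is relatively interior to it, (ii) every interior cube of such an $F$ (in the intrinsic sense) is of type $\alpha$ in $P$, and (iii) $F$ really is an integral generic orthotope in its ambient $\deg\alpha$-dimensional subspace so that $f_{\deg\alpha}(F)$ and the previous proposition are meaningful. Items (i) and (ii) follow from the proposition identifying tangent cones as constant on relatively open unit cubes together with the description of $\alpha$ as $\beta\times\mathbb{R}^{\deg\alpha}$; item (iii) uses that the faces of an integral generic orthotope inherit integrality and genericity, as developed in \cite{richter_genericorthotopes}.
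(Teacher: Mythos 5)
Your argument is correct and is essentially the derivation the paper intends: the paper states this corollary as an immediate consequence of the preceding proposition, and your face-by-face application of that proposition (via $L_{\alpha,0}(P)=\sum_j(-1)^jC_{\alpha,j}(P)$, the decomposition $C_{\alpha,j}(P)=\sum_F C_{\ar,j}(F)$ over faces $F$ of type $\alpha$, and $f_{\deg\alpha}(F)=(-1)^{\deg\alpha}\chi(F)$) is exactly the implicit argument, with the needed facts about faces of generic orthotopes supplied by \cite{richter_genericorthotopes}. No gaps beyond the minor point that one should speak of relative interiors of faces as the connected components of the type-$\alpha$ locus, which you effectively do.
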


Thus, the Euler vector $\phi(P)=\sum_{\alpha}L_{\alpha,0}(P)e_\alpha$ is a tabulation of these
values ranging over all floral arrangement congruence classes.
This is apparent in the running example.
Thus, notice that each vertex has Euler characteristic $1$, while
$P$ has precisely $L_{\arxx,0}(P)$ and $L_{\arxxn,0}(P)$ vertices
of types $\arxx$ and $\arxxn$ respectively.
Similarly, $P$ has $-L_{\arx,0}=18$ edges, with each having Euler characteristic
$1$.
Finally, $L_{\ar,0}(P)=1$, as $P$ is homeomorphic to the disc $I^2$.

\subsection{Ehrhart-Macdonald reciprocity}

The preceding result quickly leads to a derivation of Ehrhart-Macdonald 
reciprocity for integral generic orthotopes.
Recall that $tP$ contains precisely $L_{\ar}(tP)$ interior lattice points.
If $t>0$, then let $L(-tP)$ denote the evaluation of the polynomial $L(tP)$ at $-t$.

\begin{Prp}
If $P$ is a $d$-dimensional integral generic orthotope, then
$$L_{\ar}(tP)=(-1)^d L(-tP).$$
\end{Prp}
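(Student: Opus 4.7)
The plan is to establish the identity by a two-level induction, modelled on the proof of the preceding Euler characteristic proposition. The outer induction is on the ambient dimension $d$, and the inner induction is on the number of $d$-dimensional unit cubes comprising $P$. For the base cases, $d=0$ is trivial (both sides equal $1$); for $d\geq 1$ the inner base case is $P=I^d$, where $L(tP)=(t+1)^d$ and $L_{\ar}(tP)=(t-1)^d$ counts lattice points strictly interior to $[0,t]^d$, so $(-1)^d L(-tP)=(-1)^d(1-t)^d=(t-1)^d$ and the identity holds.

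For the inductive step, assuming $P$ contains more than one $d$-cube, I would borrow the decomposition $P=P^+\cup P^-$ used in the preceding $\chi$-proof: $\Pi$ is an integral hyperplane so that $P^\pm$ are $d$-dimensional generic orthotopes, each with strictly fewer $d$-cubes than $P$, and $P^+\cap P^-\subset\Pi$ is a $(d-1)$-dimensional generic orthotope. Two additive relations then drive the computation. First, the usual inclusion-exclusion for the lattice point enumerator:
$$L(tP)=L(tP^+)+L(tP^-)-L(t(P^+\cap P^-)).$$
Second, the interior of $tP$ decomposes as the disjoint union $\mathrm{int}(tP)=\mathrm{int}(tP^+)\sqcup\mathrm{int}(tP^-)\sqcup\mathrm{relint}(tP^+\cap tP^-)$, since the two opposing half-spaces glue along the shared slab to cover $\mathbb{R}^d$. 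Counting lattice points on each piece yields
$$L_{\ar}(tP)=L_{\ar}(tP^+)+L_{\ar}(tP^-)+L_{\ar}(t(P^+\cap P^-)),$$
where on the last term $\ar$ refers to the interior floral type within the $(d-1)$-dimensional ambient of $P^+\cap P^-$.

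Applying the inductive hypotheses—the inner induction to $P^\pm$ and the outer induction to $P^+\cap P^-$—produces
$$L_{\ar}(tP)=(-1)^d L(-tP^+)+(-1)^d L(-tP^-)+(-1)^{d-1}L(-t(P^+\cap P^-)),$$
which, combined with the inclusion-exclusion identity evaluated at $-t$, collapses to $L_{\ar}(tP)=(-1)^d L(-tP)$, as desired.

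The main obstacle is the one already flagged in the preceding proof: one must justify that the separating integral hyperplane $\Pi$ can be chosen so that $P^\pm$ and $P^+\cap P^-$ are themselves generic orthotopes, so that the inductive hypothesis applies to each piece. A secondary verification concerns the interior decomposition: a lattice point in $\mathrm{relint}(tP^+\cap tP^-)$ has tangent cone $\mathbb{R}^d$ in $tP$, while a lattice point on the relative boundary of $tP^+\cap tP^-$ within $\Pi$ does not, so the count splits exactly as claimed.
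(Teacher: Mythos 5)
Your argument is correct in outline, but it takes a genuinely different route from the paper's. The paper does not rerun a cut-and-paste induction at the level of Ehrhart polynomials: it deduces reciprocity from the already-established identity $\chi(P)=(-1)^d L_{\ar,0}(P)$ together with $\chi(P)=\sum_\alpha\sum_k(-1)^kC_{\alpha,k}(P)=\sum_\alpha L_{\alpha,0}(P)$, which gives $\sum_{\alpha\neq\ar}L_{\alpha,0}(P)=\left[(-1)^d-1\right]L_{\ar,0}(P)$, and then transfers this constant-term identity to every coefficient by the same half-integer cross-section device used in the proof of Theorem \ref{mainresult}, obtaining $\sum_{\alpha\neq\ar}L_{\alpha,k}(P)=\left[(-1)^{d-k}-1\right]L_{\ar,k}(P)$ for all $k$, from which the proposition follows by summing at $t$ and $-t$. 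That route is shorter because it reuses work already done, and it yields a stronger, coefficientwise statement refined by floral type. Your route is more elementary and self-contained: the inclusion-exclusion identity $L(tP)=L(tP^+)+L(tP^-)-L(t(P^+\cap P^-))$ holds for all positive integers $t$ and hence as a polynomial identity, so evaluating at $-t$ is legitimate, and your interior decomposition is valid provided ``relative interior'' of $P^+\cap P^-$ is read as interior within the cutting hyperplane $\Pi$ (for a genuinely $(d-1)$-dimensional intersection the two readings agree); your verification that a lattice point with a $\Pi$-neighborhood inside $P^+\cap P^-$ is interior to $P$ is exactly the point that needs checking, and it does check out for unions of integral unit cubes cut by an integral hyperplane.

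Two caveats deserve emphasis. First, you inherit, rather than resolve, the same unproved claim the paper leans on in the preceding Euler-characteristic proof: that the cut can be chosen so that $P^+$, $P^-$ are generic orthotopes and $P^+\cap P^-$ is a $(d-1)$-dimensional generic orthotope (and, for your inner induction, that both pieces have strictly fewer $d$-cubes, which is easy to arrange). So your proof is conditional to the same degree as the paper's argument. Second, genericity of the pieces is not a convenience but a necessity for applying the inductive hypothesis: reciprocity fails for non-generic orthogonal polytopes. For instance, for $P=[0,1]^2\cup[1,2]^2$ (two squares meeting at a corner) one has $L(tP)=2t^2+4t+1$ while the interior count is $2(t-1)^2=2t^2-4t+2\neq (-1)^2L(-tP)$. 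So if a chosen cut produced a non-generic piece or a pinched intersection, the induction would genuinely break; this is why the existence of a good cut is the real content of your ``main obstacle,'' not merely a technicality.
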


\begin{proof}
The Euler characteristic of $P$ is given by
\begin{align*}
\chi(P) &=\sum_{\alpha}\sum_k(-1)^k C_{\alpha,k}(P) \\
&=\sum_{\alpha}L_{\alpha,0}(P) \\
&=L_{\ar,0}(P)+\sum_{\alpha\neq\ar}L_{\alpha,0}(P), 
\end{align*}
where $\alpha$ denotes a floral arrangement congruence class.
Using $\chi(P)=(-1)^d L_{\ar,0}(P)$ from above, this yields
$$\sum_{\alpha\neq\ar}L_{\alpha,0}(P)=\left[(-1)^d-1\right]L_{\ar,0}(P).$$
As in the proof of Theorem \ref{mainresult}, we consider cross-sections of dimension
$d-k$ to obtain
$$\sum_{\alpha\neq\ar}L_{\alpha,k}(P)=\left[(-1)^{d-k}-1\right]L_{\ar,k}(P)$$
for all $k$.
The result then follows immediately.
\end{proof}

In the running example, we notice
$$L(tP)=L_{\ar}(tP)+L_{\arx}(tP)+L_{\arxx}(tP)+L_{\arxxn}(tP)=1+17t+19t^2,$$
while
$$L_{\ar}(tP)=1-17t+19t^2.$$

\subsection{Properties of $h^{-1}(e_\alpha)$}

For any $\alpha$, let $s_{\alpha,\beta}\in\mathbb{Z}$ denote the coefficients such that
$$h^{-1}(e_\alpha)=\sum_\beta s_{\alpha,\beta}e_\beta,$$
summing over floral vertex congruence classes $\beta$.

Recall from \cite{richter_genericorthotopes} that the bouquet sign function $\sigma$
satisfies $\sigma(\alpha)=(-1)^{\rho(\alpha)}$, where $\rho(\alpha)$ is the number of loops
in the series-parallel diagram defining $\alpha$ (which coincides with the number of disjunctions
used in its read-once expression).  Using Theorem \ref{mainresult} and the fact that both
of $(-1)^d L_{\alpha,0}(P)$ and the sum of the bouquet signs of the vertices of $P$ are
equal to the Euler characteristic of $P$ yields:

\begin{Prp}
$s_{\alpha,\ar}=(-1)^{\dim\alpha}\sigma(\alpha)$ for every floral vertex congruence class.
\end{Prp}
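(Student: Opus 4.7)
The strategy is to extract the coefficient of $e_{\ar}$ from Corollary \ref{eulervector} (i.e., Theorem \ref{mainresult} at $k=0$) and to match it against two independent evaluations of the Euler characteristic of $P$. Because $D(e_{\ar})=e_{\ar}$, reading off the $e_{\ar}$-coefficient on both sides of
\[
\phi(P) = 2^{-d}\sum_{\deg\alpha=0}C_{\alpha,0}(P)\,D(h^{-1}(e_\alpha))
\]
gives the scalar identity
\[
L_{\ar,0}(P) \;=\; 2^{-d}\sum_{\deg\alpha=0}C_{\alpha,0}(P)\,s_{\alpha,\ar}.
\]
I would then combine the proposition preceding this one, namely $(-1)^d L_{\ar,0}(P)=\chi(P)$, with the bouquet-sign evaluation of the Euler characteristic taken from \cite{richter_genericorthotopes}, $\chi(P)=2^{-d}\sum_{\deg\alpha=0}C_{\alpha,0}(P)\,\sigma(\alpha)$, and equate the two resulting expressions for $L_{\ar,0}(P)$.

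The upshot is the universal linear relation
\[
\sum_{\deg\alpha=0}C_{\alpha,0}(P)\bigl[s_{\alpha,\ar}-(-1)^d\sigma(\alpha)\bigr] \;=\; 0,
\]
valid for \emph{every} $d$-dimensional integral generic orthotope $P$. The remaining task is to upgrade this single relation (with varying coefficients $C_{\alpha,0}(P)$) into termwise vanishing. For this I would construct, for each top-dimensional floral vertex class $\alpha$, a pair of integral generic orthotopes whose vertex censuses agree except that a single vertex of the standard cube-corner type $\alpha_0$ (the $d$-fold conjunction of $\arx$) is replaced by one vertex of type $\alpha$; such a pair can be assembled by attaching or removing integer unit cubes at one corner of a large reference cube, following the series-parallel recipe for $\alpha$ provided in \cite{richter_genericorthotopes}.

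Subtracting the universal relation for the two orthotopes collapses it to
\[
s_{\alpha,\ar}-(-1)^d\sigma(\alpha) \;=\; s_{\alpha_0,\ar}-(-1)^d\sigma(\alpha_0),
\]
and the right-hand side vanishes by direct computation: since $h^{-1}$ is an algebra automorphism, $h^{-1}(e_{\alpha_0}) = \bigl(h^{-1}(e_{\arx})\bigr)^d = (-e_{\ar}+e_{\arx})^d$, whose constant term is $(-1)^d$, while $\sigma(\alpha_0)=1$. This proves the claim for every $d$-dimensional $\alpha$, and since $s_{\alpha,\ar}$ is intrinsic to $\mathcal{F}$ and independent of any ambient dimension, the case $\dim\alpha<d$ follows by running the same argument inside $\mathbb{R}^{\dim\alpha}$, yielding $s_{\alpha,\ar}=(-1)^{\dim\alpha}\sigma(\alpha)$ in full generality.

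The main obstacle is the local-modification construction: one must verify that every floral vertex type can genuinely be realized at a single integer lattice point of an otherwise cube-cornered orthotope, without accidentally introducing other exotic vertex types. I expect to discharge this by induction on the series-parallel depth of $\alpha$, using the conjunction and disjunction building blocks from \cite{richter_genericorthotopes} to successively modify a cube-corner to the target floral type while keeping all other incidentally created vertices themselves of cube-corner type.
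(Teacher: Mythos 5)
Your first half coincides with the paper's own argument: the paper obtains this proposition exactly by reading off the $e_{\ar}$-coefficient in Theorem \ref{mainresult} (equivalently Corollary \ref{eulervector}), so that $L_{\ar,0}(P)=2^{-d}\sum_{\deg\alpha=0}C_{\alpha,0}(P)s_{\alpha,\ar}$, and comparing with $(-1)^dL_{\ar,0}(P)=\chi(P)$ and the bouquet-sign evaluation of $\chi(P)$ from \cite{richter_genericorthotopes}. Your normalization $\chi(P)=2^{-d}\sum_{\deg\alpha=0}C_{\alpha,0}(P)\sigma(\alpha)$ is the correct one (in the running example $2^{-2}(11-7)=1$), and the resulting universal relation $\sum_{\deg\alpha=0}C_{\alpha,0}(P)\bigl[s_{\alpha,\ar}-(-1)^d\sigma(\alpha)\bigr]=0$ is fine.

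The gap is in the step that upgrades this single relation to termwise vanishing. The ``minimal pair'' you propose --- two integral generic orthotopes whose vertex censuses agree except that one cube-corner vertex $\alpha_0$ is traded for one vertex of type $\alpha$ --- does not exist whenever $\sigma(\alpha)=-1$, and this impossibility follows from the very bouquet-sign formula you invoke: such a pair would have Euler characteristics differing by $2^{-d}\bigl(\sigma(\alpha)-\sigma(\alpha_0)\bigr)=2^{-d}\bigl(\sigma(\alpha)-1\bigr)=-2^{1-d}$, which is not an integer for $d\geq 2$. Already in $d=2$ this is the classical fact that the number of convex corners minus the number of reflex corners equals $4\chi(P)$, so no two generic orthogons can differ by swapping exactly one convex corner for one reflex corner; thus your construction is unavailable for precisely half of the vertex types (and even when $\sigma(\alpha)=+1$ its existence is asserted, not established). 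To finish along your lines you would need something genuinely different: for instance, a proof that the census vectors $\bigl(C_{\alpha,0}(P)\bigr)_{\deg\alpha=0}$ span the full space of dimension $A_d$ as $P$ varies, or an induction that realizes each $\alpha$ as a vertex of an orthotope whose remaining vertices all have types for which the identity is already known, or a purely local/algebraic argument (multiplicativity of $h^{-1}$ gives the claim for conjunctions, since $s_{\alpha\wedge\beta,\ar}=s_{\alpha,\ar}s_{\beta,\ar}$, $\sigma(\alpha\wedge\beta)=\sigma(\alpha)\sigma(\beta)$ and dimensions add, while the complementation identities $s_{\alpha,\beta}=s_{\overline{\alpha},\overline{\beta}}$ handle disjunctions). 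As written, the decisive step of your proposal fails, so the proof is incomplete.
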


We summarize several other properties of the polynomials $h^{-1}(e_\alpha)$ and the coefficients $s_{\alpha,\beta}$:

\begin{itemize}

\item
$s_{\alpha,\alpha}=1$ for all $\alpha$.

\item
$s_{\alpha,\beta}=0$ if $\dim\alpha=\dim\beta$ and $\alpha\neq\beta$.

\item
$s_{\alpha,\beta}=0$ whenever $\dim\beta>\dim\alpha$.

\item
$s_{\alpha,\beta}=s_{\overline{\alpha},\overline{\beta}}$ whenever $\beta\neq\ar$.

\item
$s_{\alpha,\beta}$ is the number of cross-sections of type $\beta$ in $\alpha$
when $\dim\beta=\dim\alpha-1$.

\item
$s_{\alpha,\arx}=-(-1)^{\dim\alpha}\sum_{\beta}\sigma(\beta)$, summing over all facets
$\beta$ of $\alpha$.

\item
$\sum_{\beta}2^{\dim\beta}s_{\alpha,\beta}=\sigma(\alpha)$, summing over all floral vertex congruence
classes $\beta$.

\end{itemize}

One may establish all of these in a manner similar to
our study of the polynomials $h(e_\alpha)$.
Notice that the last of these is a reflection of Ehrhart-Macdonald reciprocity.
All of these properties are apparent in the tables in Figures
\ref{eulerpolys3} and in the appendix.

\subsection{Special Cases}

Let $P\subset\mathbb{R}^d$ be a $d$-dimensional integral generic orthotope.
We have already seen that
$(-1)^dL_{\ar,0}(P)$ coincides with the Euler characteristic of $P$.  The purpose of this
section is to detail some other special formulas involving some familiar valuations
of $P$, if only to better understand the notation.  In particular, we notice:

\begin{itemize}

\item
$\sum_{\deg\alpha=0} L_{\alpha,0}(P)=\#(\hbox{vertices of }P).$

\item
$\sum_{\deg\alpha=1} L_{\alpha,0}(P)=-\#(\hbox{edges of }P).$

\item
$L_{\arx,d-1}(P)=-2L_{\ar,d-1}(P)=\mathrm{Volume}_{d-1}(\partial P).$

\item
$L_{\ar,d}(P)=C_{\ar,d}(P)=\mathrm{Volume}_d(P).$

\end{itemize}

These are all trivial given an understanding of the notation $L_{\alpha,k}(P)$.
Notice the third of these is a statement of the well-known fact that the degree-$(d-1)$
coefficient of the Ehrhart polynomial of a $d$-dimensional polytope $P$ is half
of the negative of the measure of $\partial P$.

\section{A multivariable generalization}

The enumeration of lattice points described above can be adapted to
the case where coordinates are dilated independently.
Thus, if $\mathbf{t}=(t_1,t_2,...,t_d)$ is a tuple of positive integers, then
define the {\it generalized Ehrhart function} of $P$ by
$$L(\mathbf{t}P):=\#(\mathbf{t}P\cap\mathbb{Z}^d),$$
where $\mathbf{t}P$ denotes the image of $P$ under the dilation map
$$(x_1,x_2,...,x_d)\mapsto(t_1x_1,t_2x_2,...,t_dx_d).$$

All of the results in this section follow through an analysis identical to
that used above.  As above, we write
$$L(\mathbf{t}P):=\sum_\alpha L_\alpha(\mathbf{t}P),$$
where $L_\alpha(\mathbf{t}P)$ denotes the number of lattice points in $\mathbf{t}P$ at which
the tangent cone lies in $\alpha$.

\begin{Prp}
Suppose $P$ is an integral generic orthotope
and $\alpha$ is a floral arrangement.  
Then $\mathbf{t}\mapsto L_\alpha(\mathbf{t}P)$ is a polynomial function of $\mathbf{t}=(t_1,t_2,...,t_d)$
of degree $\deg\alpha$
in which every term is a square-free 
monomial in $\{t_1,t_2,...,t_d\}$.
\end{Prp}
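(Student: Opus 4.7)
The plan is to adapt the cube-counting argument that established (\ref{shiftrelation}) to the coordinate-wise dilation $\mathbf{t}$. The key observation is that the map $y \mapsto (y_1/t_1, \ldots, y_d/t_d)$ sends $\mathbf{t}P \cap \mathbb{Z}^d$ bijectively onto the set of points $x \in P$ whose $i$-th coordinate lies in $\tfrac{1}{t_i}\mathbb{Z}$ for every $i$, and this correspondence preserves tangent-cone congruence classes because each coordinate is scaled independently by a positive factor. Consequently $L_\alpha(\mathbf{t}P)$ counts points of $P$ of floral type $\alpha$ satisfying this coordinate-wise rational condition.

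To evaluate this count I would use the partition of $P$ into relatively open integral unit cubes. For such a cube $C$ of floral type $\alpha$, let $S(C) \subseteq [d]$ denote the set of indices on which $C$ varies over an open unit interval, so that $|S(C)| = \dim C$. On $C$, the coordinates with $i \notin S(C)$ are fixed integers and automatically satisfy $x_i \in \tfrac{1}{t_i}\mathbb{Z}$, while for $i \in S(C)$ the value $x_i$ must be one of the $t_i - 1$ numbers $a_i + m/t_i$ with $m \in \{1, \ldots, t_i - 1\}$. Summing the resulting cube contributions gives
\[
L_\alpha(\mathbf{t}P) = \sum_{C} \prod_{i \in S(C)}(t_i - 1),
\]
where the sum runs over relatively open integral unit cubes $C \subset P$ of floral type $\alpha$.

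Expanding each factor via $\prod_{i \in S(C)}(t_i - 1) = \sum_{T \subseteq S(C)}(-1)^{|S(C) \setminus T|}\prod_{i \in T}t_i$ shows that $L_\alpha(\mathbf{t}P)$ is a $\mathbb{Z}$-linear combination of square-free monomials in $\{t_1, \ldots, t_d\}$, of total degree at most $\max_C \dim C$. Each non-integer coordinate of a point of $C$ corresponds to a free axis direction of the tangent cone, so $\dim C$ is bounded by the number $\deg\alpha$ of trivial directions in the tangent cone congruent to $\alpha$, and the total degree is therefore at most $\deg\alpha$.

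The remaining subtlety, which I expect to be the main obstacle, is verifying that the total degree is exactly $\deg\alpha$ whenever $\alpha$ appears in $P$. The coefficient of the monomial $\prod_{i \in S} t_i$ with $|S| = \deg\alpha$ receives contributions only from cubes $C$ with $S(C) = S$ (and hence $\dim C = \deg\alpha$), each contributing $+1$, so no cancellation can occur; it suffices to exhibit a single such maximal cube. This can be arranged by perturbing any point of floral type $\alpha$ along the $\deg\alpha$ free coordinate directions of its tangent cone into the relative interior of a $\deg\alpha$-dimensional integral unit cube of the same floral type, an assertion that can be formalized via the product decomposition $\alpha \cong \beta \times \mathbb{R}^{\deg\alpha}$ recalled from \cite{richter_genericorthotopes}.
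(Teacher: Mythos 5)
Your argument is correct and follows essentially the same route the paper intends: the paper disposes of this proposition by saying the analysis is identical to the single-variable case, namely the partition of $P$ into relatively open integral unit cubes, which gives exactly your identity $L_\alpha(\mathbf{t}P)=\sum_{C}\prod_{i\in S(C)}(t_i-1)$ (in the paper's notation, $\sum_I C_{\alpha,I}(P)\prod_{i\in I}(t_i-1)$), from which square-freeness and the degree bound are immediate. Your additional verification that the degree is exactly $\deg\alpha$ (no cancellation in top-degree coefficients, plus existence of a $\deg\alpha$-dimensional cube of type $\alpha$ via the decomposition $\alpha\cong\beta\times\mathbb{R}^{\deg\alpha}$) is a point the paper leaves implicit, and it is handled correctly.
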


For $I\subset[d]$, let $\mathbf{t}_I=\prod_{i\in I}t_i$ be the square-free monomial corresponding
to $I$.  Then we write
$$L_\alpha(\mathbf{t}P):=\sum_{k=0}^{\deg\alpha}\sum_{|I|\leq k}L_{\alpha,I}(P)\mathbf{t}_I.$$
We obtain explicit expressions for $L_{\alpha,I}(P)$ as corollary of
Theorem \ref{mainresult}.
Thus, given a floral arrangement $\alpha$ and $I\subset[d]$, let $C_{\alpha,I}(P)$
denote the number of relatively open integral unit cubes of dimension $|I|$ in $P$
whose tangent cones are congruent to $\alpha$ and which are parallel to the hyperplane
$\Pi_I=\mathrm{span}\{e_i:i\in I\}$.  Then the formula gives us:
\begin{equation}
\sum_\alpha L_{\alpha,I}(P)e_\alpha=2^{|I|-d}\sum_{\deg\alpha=|I|}C_{\alpha,I}(P)D(h^{-1}(e_\alpha)).
\label{generalizedehrhart}
\end{equation}

We illustrate this for the running example.  The cube counts $C_{\alpha,I}(P)$ appear in
the table in Figure \ref{multivariablecounts2d} and the coefficients $L_{\alpha,I}(P)$
appear in the table in Figure \ref{multivariablecoefs2d}.

\begin{figure}
$$\begin{array}{c|cccc}
\alpha & C_{\alpha,\{\}} & C_{\alpha,\{1\}} & C_{\alpha,\{2\}} & C_{\alpha,\{1,2\}}\\
\hline
\ar & 3 & 12 & 9 & 19 \\
\arx & 16 & 14 & 20 &   \\
\arxx & 11 & & &  \\
\arxxn & 7 & & & \\
\end{array}$$
\caption{Multivariable cube counts for the example.}
\label{multivariablecounts2d}
\end{figure}

\begin{figure}
$$\begin{array}{c|cccc}
\alpha & L_{\alpha,\{\}} & L_{\alpha,\{1\}} & L_{\alpha,\{2\}} & L_{\alpha,\{1,2\}}\\
\hline
\ar & 1 & -7 & -10 & 19 \\
\arx & -18 & 14 & 20 &   \\
\arxx & 11 & & &  \\
\arxxn & 7 & & & \\
\end{array}$$
\caption{Multivariable cube counts for the example.}
\label{multivariablecoefs2d}
\end{figure}

\section{Example in 3 dimensions}

Define a 3-dimensional integral orthogonal polytope as $P:=\bigcup_{v\in S}\left(v+[0,1]^3\right)$,
where $S\subset\mathbb{R}^3$ appears in Figure \ref{examplecorners}.
Evidently $P$ lies in the primary octant in $\mathbb{R}^3$.
One should imagine $P$ as an assembly of several 
stacks of unit cubes resting ``skyscraper style'' on a flat surface representing the $(x,y)$-plane
in $\mathbb{R}^3$.  A sketch of $P$ appears in Figure \ref{exampleehrhart}.  
The shading in the figure is intended to illustrate the annular facet
where $P$ makes contact with the $(x,y)$ plane.  The numbers
appearing in the sketch tell the height of each stack.  Notice that $P$ is
a solid 3-dimensional torus.

\begin{figure}[h]
$$S=\left\{\begin{array}{ccccccc}
(0,0,0), & (1,0,0), & (2,0,0), & (3,0,0), & (4,0,0), & (0,1,0), & (1,1,0), \\
(2,1,0), & (3,1,0), & (0,2,0), & (2,2,0), & (0,3,0), & (1,3,0), & (2,3,0), \\
(1,0,1), & (2,0,1), & (3,0,1), & (4,0,1), & (2,1,1), & (3,1,1), & (2,2,1), \\
(0,3,1), & (1,3,1), & (2,3,1), & (1,0,2), & (2,0,2), & (3,0,2), & (4,0,2) \\
\end{array}\right\}.$$
\caption{Generating corners of the example.}
\label{examplecorners}
\end{figure}

\begin{figure}[h] 
\centering 
\includegraphics[width=0.333\textwidth]{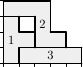}
\caption{A 3-dimensional integral generic orthotope.}
\label{exampleehrhart}
\end{figure}

Figure \ref{combinatorics} displays the values of $C_{\alpha,k}(P)$ and $L_{\alpha,k}(P)$.
Using the formula from Theorem \ref{mainresult} 
with the tabulated values, for example, one may compute the Euler vector:
\begin{align*}
\phi(P)= 
& 2^{-3}\left[15D(h^{-1}(\arxxx))+11D(h^{-1}(\arxxnx))\right. \\
 &\left.+5D(h^{-1}(\arxxnxn))+1D(h^{-1}(\arxxxn))\right] \\
=& \frac{15}{8}\left[-(\ar)+6(\arx)-12(\arxx)+8(\arxxx)\right] \\
+& \frac{11}{8}\left[(\ar)+2(\arx)-8(\arxx)-4(\arxxn)+8(\arxxnx)\right] \\
+& \frac{5}{8}\left[(\ar)+2(\arx)-4(\arxx)-8(\arxxn)+8(\arxxnxn)\right] \\
+& \frac{1}{8}\left[-(\ar)+6(\arx)-12(\arxxn)+8(\arxxxn)\right] \\
=& 0(\ar)+16(\arx)-36(\arxx)-12(\arxxn) \\
 & +15(\arxxx)+11(\arxxnx)+5(\arxxnxn)+1(\arxxxn).
\end{align*}
In particular, notice that $L_{\ar,0}(P)=(-1)^3\chi(P)=0$, as $P$ is a solid 3-dimensional torus.

We may also use the formula (\ref{generalizedehrhart}) to compute the multivariable Ehrhart polynomials.  These
appear in Figure \ref{coefs}.  Reading the table, for example, one sees
$$L_{\arx}(\mathbf{t}P)=16-(32t_1+24t_2-28t_3)+(28t_1t_2+32t_1t_3+20t_2t_3)$$
yields the number of lattice points in $\mathbf{t}P$ whose tangent cones have type $\arx$.

\begin{figure}[h]
$$\begin{array}{c|cccc|cccc}
\alpha & C_{\alpha,0} & C_{\alpha,1} & C_{\alpha,2} & C_{\alpha,3} & L_{\alpha,0} & L_{\alpha,1} & L_{\alpha,2} & L_{\alpha,3} \\
\hline
\ar & 1 & 17 & 44 & 28 & 0 & 13 & -40 & 28 \\
\arx & 12 & 76 & 80 & & 16 & -84 & 80 &  \\
\arxx & 32 & 68 & & & -36 &  68 & & \\
\arxxn & 4 & 16 & & & -12 & 16 & & \\
\arxxx & 15 & & & & 15 & & & \\
\arxxnx & 11 & & & & 11 & & & \\
\arxxnxn & 5 & & & & 5 & & & \\
\arxxxn & 1 & & & & 1 & & & \\
\end{array}$$
\caption{Cube counts $C_{\alpha,k}$ and coefficients $L_{\alpha,k}$ of the example.}
\label{combinatorics}
\end{figure}

\begin{figure}[h]
$$\begin{array}{c|c|ccc|ccc|c}
\alpha & L_{\alpha,\emptyset} & L_{\alpha,1} & L_{\alpha,2} & L_{\alpha,3} & L_{\alpha,12} & L_{\alpha,13} & L_{\alpha,23}
& L_{\alpha,123} \\
\hline
\ar & 0 & 6 & 5 & 2 & -14 & -16 & -10 & 28 \\
\arx & 16 & -32 & -24 & -28 & 28 & 32 & 20  \\
\arxx & -36 & 28 & 22 & 18 \\
\arxxn & -12 & 4 & 2 & 10 \\
\arxxx &  15 \\
\arxxnx & 11 \\
\arxxnxn & 5 \\
\arxxxn & 1 \\
\end{array}$$
\caption{Coefficients $L_{\alpha,I}$ of the example.}
\label{coefs}
\end{figure}

\section{Conclusion}

We have described a theory of Ehrhart polynomials adapted to integral generic orthotopes.
We anticipate further development concerning combinatorics of generic orthotopes.
This note was concerned solely with generic orthotopes which
are integral, and one can imagine extending these results to generic orthotopes which
are rational.  Even further, we anticipate a version of the Euler-Maclaurin summation formulae
for generic orthotopes in a vein similar to that of \cite{BV_1997}.

This author is particularly curious about the space of generic orthotopes with a given
count data set $\{C_{\alpha,\deg\alpha}\}$.  As we have seen, the number of lattice points in $P$
depends only on these values.  However, even in dimension $d=2$, it is easy to find
non-congruent generic orthogons which have the same data $\{C_{\alpha,\deg\alpha}\}$
(and therefore the same number of lattice points).  Can we say anything definitive about
the space of integral generic orthotopes having the same combinatorial data?

We introduced the algebra $\mathcal{F}$ and the ``local polynomials'' 
$h(e_\alpha),h^{-1}(e_\alpha)\in\mathcal{F}$ in order to express our formula
in Theorem \ref{mainresult}.  One notices that these objects can be defined
entirely within the theory of Boolean functions.  That is, they are intrinsic
to the study of read-once Boolean functions.  The question of how these
polynomials and especially the coefficients $m_{\alpha,\beta},s_{\alpha,\beta}\in\mathbb{Z}$
may arise in studies of complexity measures of read-once Boolean functions therefore arises naturally,
cf.\ \cite{LM_2021}.

\section{Appendix}

This section displays the polynomials $h(e_\alpha)$ and $h^{-1}(e_\alpha)$ where
$\dim\alpha=4$.

\begin{landscape}
\begin{figure}[p]
$$\begin{array}{c|c}
\alpha & h(e_\alpha) \\
\hline
\arxxxx & (\ar)+4(\arx)+6(\arxx)+4(\arxxx)+(\arxxxx) \\
\arxxnxx & 3(\ar)+8(\arx)+7(\arxx)+(\arxxn)+2(\arxxx)+2(\arxxnx)+(\arxxnxx) \\
\arxxnxnx & 5(\ar)+10(\arx)+6(\arxx)+2(\arxxn)+(\arxxx)+2(\arxxnx)+(\arxxnxn)+(\arxxnxnx) \\
\arxxxnx & 7(\ar)+10(\arx)+3(\arxx)+3(\arxxn)+3(\arxxnx)+(\arxxxn)+(\arxxxnx) \\
\arxxuxxn & 9(\ar)+12(\arx)+4(\arxx)+6(\arxxn)+4(\arxxnx)+(\arxxuxxn) \\
\arxxuxx & 7(\ar)+12(\arx)+6(\arxx)+4(\arxxn)+4(\arxxnxn)+(\arxxuxx)\\
\arxxxnxn & 9(\ar)+10(\arx)+3(\arxx)+3(\arxxn)+(\arxxx)+3(\arxxnxn)+(\arxxxnxn) \\
\arxxnxnxn & 11(\ar)+10(\arx)+2(\arxx)+6(\arxxn)+(\arxxnx)+2(\arxxnxn)+(\arxxxn)+(\arxxnxnxn) \\
\arxxnxxn & 13(\ar)+8(\arx)+(\arxx)+7(\arxxn)+2(\arxxnxn)+2(\arxxxn)+(\arxxnxx)\\
\arxxxxn & 15(\ar)+4(\arx)+6(\arxxn)+4(\arxxxn)+(\arxxxxn) \\
\end{array}$$
\caption{Polynomials $h(e_\alpha)$ with $\dim\alpha=4$.}
\label{polys4}
\end{figure}

\begin{figure}[p]
$$\begin{array}{c|c}
\alpha & h^{-1}(e_\alpha) \\
\hline
\arxxxx & (\ar)-4(\arx)+6(\arxx)-4(\arxxx)+(\arxxxx) \\
\arxxnxx & -(\ar)+3(\arxx)+(\arxxn)-2(\arxxx)-2(\arxxnx)+(\arxxnxx) \\
\arxxnxnx & -(\ar)+2(\arxx)+2(\arxxn)-(\arxxx)-2(\arxxnx)-(\arxxnxn)+(\arxxnxnx) \\
\arxxxnx & (\ar)-4(\arx)+3(\arxx)+3(\arxxn)-3(\arxxnx)-(\arxxxn)+(\arxxxnx) \\
\arxxuxxn & (\ar)+4(\arx)+4(\arxx)-2(\arxxn)-4(\arxxnx)+(\arxxuxxn) \\
\arxxuxx & -(\ar)+4(\arx)-2(\arxx)+4(\arxxn)-4(\arxxnxn)+(\arxxuxx)\\
\arxxxnxn & -(\ar)-4(\arx)+3(\arxx)+3(\arxxn)-(\arxxx)-3(\arxxnxn)+(\arxxxnxn) \\
\arxxnxnxn & (\ar)+2(\arxx)+2(\arxxn)-(\arxxnx)-2(\arxxnxn)-(\arxxxn)+(\arxxnxnxn) \\
\arxxnxxn & (\ar)+(\arxx)+3(\arxxn)-2(\arxxnxn)-2(\arxxxn)+(\arxxnxx)\\
\arxxxxn & -(\ar)-4(\arx)+6(\arxxn)+-4(\arxxxn)+(\arxxxxn) \\
\end{array}$$
\caption{Polynomials $h^{-1}(e_\alpha)$ with $\dim\alpha=4$.}
\end{figure}
\label{eulerpolys4}
\end{landscape}

\end{document}